\documentclass[12pt, one side,emlines]{amsart}
\usepackage{amssymb,latexsym,xy,eucal,mathrsfs, graphicx, tikz}
\usepackage{amsmath,amssymb,amsfonts,textcomp}
\usepackage{amscd}
\usepackage{amsthm}
\textwidth=20cm \textheight=30cm \theoremstyle{plain}
\newtheorem{lemma}{Lemma}[section]
\newtheorem{theorem}[lemma]{Theorem}
\newtheorem{proposition}[lemma]{Proposition}
\newtheorem{corollary}[lemma]{Corollary}
\usepackage{graphicx}
\theoremstyle{definition}
\newtheorem{example}[lemma]{Example}
\newtheorem{remark}[lemma]{Remark}

\newtheorem{definition}[lemma]{Definition}

\setlength{\oddsidemargin}{0.5in}
\setlength{\evensidemargin}{0.5in}
\usepackage[bottom=1.0in,top=1.5in]{geometry}
\numberwithin{equation}{section} \thispagestyle{empty} \voffset
-55truept \hoffset -15truept
\begin{document}
	\baselineskip 15truept
\title{Generalized zero-divisor graph of $*$-rings  }
\subjclass[2010]{Primary 16W10; Secondary 05C25, 05C15 } 
\author[Anita Lande]
{Anita Lande$^{1}$}	
\address{\rm $1$ Department of Mathematics, Abasaheb Garware College, Pune-411 004, India.}
\email{\emph{anita7783@gmail.com}}

\author[Anil Khairnar]
{Anil Khairnar$^{2}$}	
\address{\rm $2$ Department of Mathematics, Abasaheb Garware College, Pune-411 004, India.}
\email{\emph{anil\_maths2004@yahoo.com, ask.agc@mespune.in}}

\maketitle 
\footnotetext[1]{Corresponding author}
\begin{abstract} Let $R$ be a ring with involution $*$ and $Z^*(R)$ denotes the set of all non-zero zero-divisors of $R$.
  We associate a simple (undirected) graph $\Gamma'(R)$ with vertex set  $Z^*(R)$  and two distinct vertices $x$ and $y$ are adjacent in $\Gamma'(R)$ if and only if $x^ny^*=0$ or $y^nx^*=0$, for some positive integer $n$. We find the diameter and girth of $\Gamma'(R)$. The characterizations are obtained for $*$-rings   having $\Gamma'(R)$ a connected graph, a complete graph, and a star graph. Further, we have shown that for a ring $R$, there is an involution on $R\times R$ such that $\Gamma'(R\times R)$ is disconnected if and only if $R$ is an integral domain.
	
\end{abstract}

 \noindent {\bf Keywords:}  $*$-ring, involution, projections, Artinian ring.
\section{Introduction}

  The concept of the zero-divisor graph for a commutative ring was first introduced by I. Beck \cite{beck1988coloring} in $1988$. He defined the zero-divisor graph of a ring $R$ as a graph on vertex set $R$ and two distinct vertices $x$ and $y$ in $R$ are adjacent if and only if $xy=0$.  He was interested in a ring for which its zero-divisor graph has the same chromatic number and clique number. This is known as Beck's coloring conjecture. Being motivated by Beck, Anderson, and Livingston \cite{anderson1999zero} defined the zero-divisor graph for a commutative ring denoted by $\Gamma(R)$ as a simple (undirected) graph, whose vertex set is a set of  non-zero zero-divisors $Z^*(R)$ of $R$      and two  vertices $x$ and $y$ in $Z^*(R)$ are adjacent if and only if $xy=0$.  Redmond \cite{redmond2002zero} defined  the zero-divisor graph for non-commutative ring $R$  denoted by $\overline{\Gamma(R)}$ to be a simple (undirected) graph, whose vertex set is   $Z^*(R)$ and two distinct vertices $x$ and $y$ in $Z^*(R)$ are adjacent if and only if $xy=0$ or $yx=0$. They studied the diameter, girth, and connectivity for the zero-divisor graphs.  
  
   \par  A $*$-ring is  a ring with a map $*$ called an involution  satisfying  $(x + y)^* = x^*+y^*$, $(xy)^* = y^*x^*$, and $(x^*)^*=x$, for all $x,y\in R$. An element $e$ in a $*$-ring $R$   is called  a projection if $e^2=e=e^*$. An involution $*$ is called a proper involution if $xx^*=0$ then $x=0$. In a $*$-ring $R$ with $S\subseteq R$, $ann_l(S)=\left\{x\in R \colon xs=0~\text{for all}~s\in S\right\}$ $(ann_r(S)=\left\{x\in R \colon sx=0~\text{for all}~s\in S\right\})$ called as set of all left annihilators of $S$ (set of all right annihilators of $S$). 
  \par
     Berberian \cite{berberian1972baer}, defined a Rickart $*$-ring.    A  Rickart $*$-ring  is a $*$-ring such that the right annihilator (the left annihilator) of every element is generated as a right ideal (left ideal), by a projection in $R$. i.e. $\forall x\in R$, $r_R(x)=eR$ ($l_R(x)=Re$) for some projection $e$ in $R$. According to (\cite{cui2018pi}, Definition 2.1), a $*$-ring $R$ is called a generalized Rickart $*$-ring if for any $x\in R$,
      there exists a positive integer $n$ such that $r_R(x^n)=gR$, for some projection $g$ of $R$. Note that $l_R(x^n)=(r_R((x^n)^*))^*$. Thus, generalized Rickart $*$-rings is left-right symmetric. 
     
\par In a graph $G$,  the distance between  two distinct vertices $x$ and $y$ of $G$, denoted by  $d(x,y)$ is the length of the shortest path from $x$ to $y$, otherwise $d(x,y)=\infty$. The diameter of a graph $G$ is defined as $\text{diam}(G)=\sup \{d(x,y)~|~ x ~\text{and}~ y~ \text{are~ vertices ~of}~ G\}$. The girth of $G$, denoted by $\text{gr}(G)$, is the length of the shortest cycle in $G$. Girth of $G$ is $\infty$ if $G$ contains no cycle. We will use the notation $x\leftrightarrow y$ to denote $x$ and $y$ are adjacent. The authors refer to \cite{berberian1972baer}, \cite{atiyah2018introduction}, \cite{godsil2001algebraic}   for the basic definitions and concepts in $*$-rings,  ring theory, and graph theory. In (\cite{samei2007zero},\cite{akbari2004zero}, \cite{akbari2006zero}, \cite{anderson2003zero}) authors studied the  properties of  zero-divisor graphs for commutative rings. In (\cite{redmond2002zero},\cite{redmond2001generalizations}, \cite{patil2018zero}, \cite{khairnar2016zero}, \cite{kumbhar2023strong}) authors studied properties of zero-divisor graph for non-commutative rings.
 In \cite{patil2018zero}, Patil and Waphare introduced the concept of the zero-divisor graph for the rings with involution. 
 Let $R$ be a ring with involution $*$,  $Z(R),~ Z^*(R)$ denotes the set of all zero-divisors and  the set of all non-zero zero-divisors of $R$. For a $*$-ring $R$ the zero-divisor graph $\Gamma^*(R)$ is a graph  with vertex set $Z^*(R)$ and  two distinct elements $x$ and $y$ are adjacent  if and only if  $xy^*=0$. 
 \par  Motivated by the definition of the zero-divisor graph of $*$-ring  in \cite{patil2018zero} and the definition of generalized Rickart $*$-ring in \cite{cui2018pi}, in the second section we introduce the generalized zero-divisor graph denoted by $\Gamma'(R)$ for a $*$-ring $R$. Examples are provided to show that the generalized zero-divisor graph is different from $\Gamma^*(R), \Gamma(R)$, and $\overline{\Gamma(R)}$. We discuss the connectedness, the diameter, and the girth of $\Gamma'(R)$. We express $\Gamma'(R)$ as a generalized join of an induced subgraph of $\Gamma^*(R)$ and the complete graph on non-zero nilpotent elements in $R$.
 In the third section, we have characterized  $*$-rings for which $\Gamma'(R)$ is a complete graph, and characterized $*$-rings for which $\Gamma'(R)$ is a star graph.  We give the class of $*$-rings for which $\Gamma'(R)$ is disconnected.

\section{Diameter and girth of $\Gamma'(R)$ }
In this section, for a $*$-ring $R$, we define the generalized zero-divisor graph  denoted by $\Gamma'(R)$. We discuss the diameter and the girth of the graph $\Gamma'(R)$.

\begin{definition} {\label{def 2}}
Let $R$ be a $*$-ring, the generalized zero-divisor graph denoted by $\Gamma^{'}(R)$ is a simple (undirected) graph with vertex set $Z^*(R)$, and two distinct vertices $x$ and $y$ are adjacent if and only if $x^ny^*=0$ or $y^nx^*=0$ for some positive integer $n$.
\end{definition}
\begin{example}
Let  $\mathbb{Z}_8$ be a ring with identity mapping as an involution.  Note that, $\Gamma^*(\mathbb{Z}_8)=\Gamma(\mathbb Z_8)$. Observe that (Figure \ref{fig 2}), $\Gamma'(\mathbb Z_8)$ is not isomorphic to $\Gamma^*(\mathbb Z_8)$. Also, it is not isomorphic to $\Gamma(\mathbb Z_8)$. 
\end{example}
\begin{figure}[h] 
	\begin{tikzpicture}[scale=0.8]		

		\node [left] at(5.5,-0.2) {$2$}; 
					    \node [left] at(9.5,-0.2) {$4$}; 
						\node [left] at(7.5,2.3) {$6$};  
						\draw[fill=black](5.5,0)circle(.06);  
						\draw[fill=black](8.5,0)circle(.06); 
						\draw[fill=black](7.0,2)circle(.06);  
				
							\draw (5.5,0)--(8.5,0);
							\draw (8.5,0)--(7.0,2);

						\node [left] at(9,-1) {Figure (a) $\Gamma^{*}(\mathbb Z_{8})=\Gamma(\mathbb Z_8)$ };
		\node [left] at(12.5,-0.2) {$2$}; 
			    \node [left] at(16.5,-0.2) {$4$}; 
				\node [left] at(14.5,2.3) {$6$};  
				\draw[fill=black](12.5,0)circle(.06);  
				\draw[fill=black](15.5,0)circle(.06); 
				\draw[fill=black](14.0,2)circle(.06);  
		
					\draw (12.5,0)--(15.5,0);
					\draw (15.5,0)--(14.0,2);
					\draw (12.5,0)--(14.0,2);

				\node [left] at(16,-1) {Figure (b) $  \Gamma^{'}(\mathbb Z_{8})$ };

	\end{tikzpicture}
\caption{Zero-divisor graphs $\Gamma^*(\mathbb Z_8)$ and $\Gamma^{'}(\mathbb Z_8)$ }
\label{fig 2}
\end{figure}  
\begin{remark}
For a $*$-ring $R$, $\Gamma^*(R)$ is a subgraph of $\Gamma'(R)$.
\end{remark}
\begin{theorem}
Let $R$ be a reduced $*$-ring then $\Gamma^*(R)$ is isomorphic to $\Gamma'(R)$. 
\end{theorem}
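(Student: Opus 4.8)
The plan is to show that $\Gamma^*(R)$ and $\Gamma'(R)$ are not merely isomorphic but literally equal: they have the same vertex set $Z^*(R)$ by definition, and I will argue that they have the same edge set, so that the identity map on $Z^*(R)$ is the required isomorphism. By the preceding Remark, $\Gamma^*(R)$ is already a subgraph of $\Gamma'(R)$ (take $n=1$ in the definition of $\Gamma'(R)$), so the only thing to prove is the reverse inclusion of edges: if $x,y\in Z^*(R)$ are adjacent in $\Gamma'(R)$, then they are adjacent in $\Gamma^*(R)$.

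The key point is a purely ring-theoretic claim, valid in any reduced ring (one with no nonzero nilpotents): if $a^n b = 0$ for some positive integer $n$, then $ab = 0$. First I would reduce to the case $n=2$: if $n\geq 3$, then $a^2(a^{n-2}b) = a^n b = 0$, so the $n=2$ case applied with $b$ replaced by $a^{n-2}b$ yields $a^{n-1}b = 0$, and a downward induction on the exponent finishes. For $n=2$, starting from $a^2 b = 0$ one computes $(aba)^2 = ab\,(a^2 b)\,a = 0$, so $aba = 0$ since $R$ is reduced; then $(ab)^2 = (aba)b = 0$, so $ab = 0$, again since $R$ is reduced.

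Granting the claim, the theorem is immediate. If $x$ and $y$ are adjacent in $\Gamma'(R)$, then $x^n y^* = 0$ or $y^n x^* = 0$ for some positive integer $n$; applying the claim with $(a,b) = (x,y^*)$ or $(a,b)=(y,x^*)$ gives $xy^* = 0$ or $yx^* = 0$. Since $(xy^*)^* = yx^*$, these two conditions are equivalent, so in either case $xy^* = 0$, i.e. $x$ and $y$ are adjacent in $\Gamma^*(R)$. Hence $\Gamma^*(R)$ and $\Gamma'(R)$ have the same vertices and the same edges, and the identity map on $Z^*(R)$ is a graph isomorphism (in fact $\Gamma^*(R) = \Gamma'(R)$).

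I expect the only real content to be the $n=2$ step of the claim — the ``square and substitute'' manipulation that forces $aba$, and then $ab$, to be nilpotent — while the rest is bookkeeping; there is no genuine graph-theoretic obstacle, since the isomorphism is just the identity map.
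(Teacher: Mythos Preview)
Your proof is correct and follows essentially the same line as the paper's: both reduce the statement to the fact that in a reduced ring $r_R(x)=r_R(x^n)$ for all $n\geq 1$, equivalently $a^n b=0\Rightarrow ab=0$. The paper simply asserts this fact, whereas you supply a detailed proof via the $(aba)^2=0$ trick and explicitly handle both disjuncts in the $\Gamma'(R)$ adjacency condition; the underlying idea is identical.
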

\begin{proof}
Let $R$ be a reduced $*$-ring and $x \in R$. Then $r_R(x) = r_R(x^n)$ for any positive integer $n$. Hence $xy^*=0$ if and only if $x^ny^*=0$ for any positive integer $n$. Thus $\Gamma^*(R)$ is isomorphic to $\Gamma'(R)$. 
\end{proof}
We give an example to show that the  generalized zero-divisor graph $\Gamma'(R)$ is different from $\overline{\Gamma(R)}$ defined  in \cite{redmond2002zero}.
\begin{example}
 Let $R=M_2(\mathbb Z_2)$ be a $*$-ring with the transpose of a matrix as an involution. The zero-divisor graph $\overline{\Gamma(R)}$ is depicted in Figure \ref{fig3}, and $\Gamma'(R)$ is depicted in  Figure \ref{fig31}. Observe that $\overline{\Gamma(R)}$ and $\Gamma'(R)$ are not isomorphic.
\end{example}
\begin{figure}[!hbt]
\begin{minipage}[c]{0.5\linewidth}
\centering
\includegraphics[width=4.0cm]{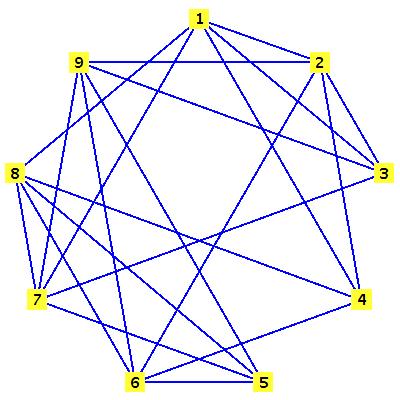}
\caption{$\overline{\Gamma(M_2(\mathbb Z_2))}$}
\label{fig3}
\end{minipage}\hfill
\begin{minipage}[c]{0.5\linewidth}
\centering
\includegraphics[width=4.0cm]{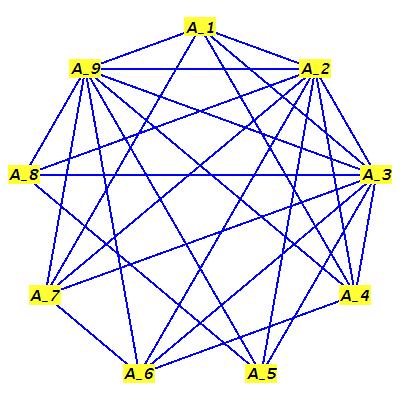}
\caption{$\Gamma'(M_2(\mathbb Z_2))$}
\label{fig31}
\end{minipage}
\end{figure}
Let $N(x)$ denote the neighborhood of $x$ in  $\Gamma'(R)$. In the following lemma, we give  relationship between the neighborhood of $x$ and the neighborhood of $x^k$ for any positive integer $k$. 
\begin{lemma}
Let $R$ be a $*$-ring.  Then
\begin{enumerate} 
\item for any $x\in Z^*(R)$ and positive integer $k$, $N(x)\subseteq N(x^k)$.
\item  For any $x\in Z^*(R)$ and positive integer $k$, $N(x^k)\subseteq N(x^{k+1})$.
\item If $x$ is a non-zero  nilpotent element then  $N(x)= N(x^k)=Z^*(R)$ for any positive integer $k$.
\item  If $x$ is a potent element (i.e., $x^m=x~\text{for some positive integer m})$ then $N(x)=N(x^k)$ for any positive integer $k$.
\end{enumerate}
\end{lemma}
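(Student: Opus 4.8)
The plan is to dispose of (1)--(3) by elementary exponent manipulations in the defining adjacency condition, using that $*$ is an anti-automorphism (so $(x^k)^*=(x^*)^k$ for all $k\ge 1$), and to concentrate on (4), whose real content is an annihilator-stabilisation fact for potent elements. In (1) and (2) one may assume $x^k\ne 0$, since otherwise $x$ is nilpotent and (3) applies.

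For (1): if $x$ and $y$ pass the adjacency test, then in the case $x^ny^*=0$ we get $(x^k)^ny^*=x^{kn-n}(x^ny^*)=0$, and in the case $y^nx^*=0$ we get $y^n(x^k)^*=(y^nx^*)(x^*)^{k-1}=0$; so $x^k$ and $y$ pass the test, giving $N(x)\subseteq N(x^k)$. The analogous computation gives (2): from $x^{kn}y^*=0$ we get $x^{(k+1)n}y^*=x^{n}(x^{kn}y^*)=0$, and from $y^n(x^*)^k=0$ we get $y^n(x^*)^{k+1}=(y^n(x^*)^k)x^*=0$; note (1) is also the iterate of (2). For (3): if $x^t=0$ then $x^ty^*=0$ for every $y\in Z^*(R)$, so $x$ is adjacent to every other vertex and $N(x)=Z^*(R)$; each nonzero power $x^k$ is again nilpotent, so the same holds for it, or simply $Z^*(R)=N(x)\subseteq N(x^k)\subseteq Z^*(R)$ by (1).

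Part (4) is the heart of the matter, and I would isolate the following claim: \textbf{if $a\in R$ is potent, say $a^m=a$ with $m\ge 2$, then $r_R(a^N)=r_R(a)$ and $l_R(a^N)=l_R(a)$ for every $N\ge 1$.} One first proves by induction on $s\ge 0$ that $a=a^{s(m-1)+1}$; the inductive step is $a^{(s+1)(m-1)+1}=a^{s(m-1)+1}a^{m-1}=a\cdot a^{m-1}=a^m=a$. Given $a^Nz=0$, choose $s$ with $s(m-1)\ge N$ and write $a=a^{s(m-1)+1-N}a^N$, so that $az=a^{s(m-1)+1-N}(a^Nz)=0$; hence $r_R(a^N)\subseteq r_R(a)$, the reverse inclusion being trivial, and the left-annihilator statement follows by the symmetric argument (factoring $a^N$ off on the right). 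Granting the claim, fix a potent $x$; then $x^*$ is also potent, as $(x^*)^m=(x^m)^*=x^*$. By (1), $N(x)\subseteq N(x^k)$. Conversely, let $y\in N(x^k)$: if $x^{kn}y^*=0$ for some $n$ then $y^*\in r_R(x^{kn})=r_R(x)$, so $xy^*=0$; if $y^n(x^k)^*=y^n(x^*)^k=0$ for some $n$ then $y^n\in l_R((x^*)^k)=l_R(x^*)$, so $y^nx^*=0$. In both cases $x\leftrightarrow y$, whence $N(x^k)\subseteq N(x)$ and equality holds.

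The only step carrying genuine difficulty is the stabilisation claim in (4); once the identity $a=a^{s(m-1)+1}$ is in hand, extracting a factor $a^N$ from $a$ makes everything immediate, and (1)--(3) are pure exponent bookkeeping together with $(x^k)^*=(x^*)^k$. A minor point to watch throughout is that the inclusions above compare \emph{neighbourhoods} while the computations compare the \emph{elements} $x^k$, $x^{k+1}$, $y$; when two of these coincide one reads the inclusion with the usual convention that a vertex is not its own neighbour, which affects none of the conclusions.
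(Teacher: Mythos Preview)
Your argument is correct. Parts (1)--(3) proceed by the same exponent bookkeeping as the paper, with essentially identical computations.

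For (4) the two proofs diverge. The paper does not isolate an annihilator statement; instead it exploits the periodicity of powers directly: since $x^m=x$, one has $N(x^m)=N(x)$, and for $k\le m$ the chain $N(x)\subseteq N(x^k)\subseteq N(x^m)=N(x)$ from (1) and (2) closes up; for $k>m$ the paper observes $x^{2m}=x^2$ and sketches an induction, effectively reducing every power to one of $x,x^2,\dots,x^{m-1}$. Your route---proving $a=a^{s(m-1)+1}$ and deducing $r_R(a^N)=r_R(a)$, $l_R(a^N)=l_R(a)$---is a genuinely different organisation: it does not lean on (2), it yields a reusable ring-theoretic fact about potent elements, and it makes the reverse inclusion $N(x^k)\subseteq N(x)$ a one-line consequence rather than an induction on $k$. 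The paper's version is slightly shorter but leaves the $k>m$ case as a sketch; yours is more self-contained and more informative.

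On your closing caveat: both the paper and your write-up share the same looseness about the boundary cases $y=x^k$ or $y=x^{k+1}$ (and, in (3), the identification $N(x)=Z^*(R)$ tacitly treats the neighbourhood as closed). This is a matter of convention in the statement rather than a gap in either argument.
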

\begin{proof}\noindent
\begin{enumerate}
\item Let  $y$ be adjacent to $x$ in  $\Gamma'(R)$. Then $x^ny^*=0$ or $y^nx^*$ for some positive integer $n$. Therefore $(x^k)^ny^*=0$ or $y^n(x^k)^*=0$. Hence 
$y$ is adjacent to $x^k$. Thus $N(x)\subseteq N(x^k)$ for any positive integer $k$.
\item Let $y\in N(x^k)$. Then there exists positive integer $i$ such that $(x^{k+1})^iy^*=x^{i}(x^k)^iy^*=0$ or $0=y^i(x^k)^*=y^i(x^{k+1})^*$. Therefore $y\in N(x^{k+1})$. Hence $N(x^k)\subseteq N(x^{k+1})$.
\item If $x$ is a non-zero nilpotent element then there exists positive integer $s$ such that $x^s=0$. Thus $x^sy^*=0$ and $(x^k)^sy^*=0$. Therefore all vertices $y$ are adjacent to $x^k$, for any positive integer $k$. Hence $N(x)= N(x^k)=Z^*(R)$ for any positive integer $k$.
\item Let $k$ be any positive integer and $x^m=x$ for some positive integer $m$.  
If $k\leq m$, then by (2), $N(x^k)\subseteq N(x^m)=N(x)\subseteq N(x^k)$. Thus $N(x)=N(x^k)$.\\
Suppose $m<k$. Note that $x^{2m}=x^mx^m=x^2$.  Therefore $N(x)\subseteq N(x^{2m})= N(x^2)\subseteq N(x^m)=N(x)$. Hence $N(x^{2m})=N(x)$. Inductively, we can show that $N(x^k)=N(x)$ for any positive integer $k$.
\end{enumerate}
\end{proof}
\begin{lemma}{\label{lem5.2}}
Let $R$ be a $*$-ring with unity and $V(\Gamma'(R))=Z^*(R)$ be a vertex set of  $\Gamma'(R)$. Then, 
\begin{enumerate}
\item $x\in Z^*(R)$ if and only if $x^*\in Z^*(R)$.
\item If $x,y$ are adjacent in $\Gamma'(R)$, then $x^i,~ y^j$ are adjacent in $\Gamma'(R)$, where $i,~ j$ are integers.
\item Let $e\in Z^*(R)$ be a projection.  Then $e$ and $x$ are adjacent in $\Gamma'(R)$ if and only if $x^n\in R(1-e)$ for some $n\in \mathbb{N}$.
\item If $e,~f\in Z^*(R)$ are projections, then $e$ and $f$ are adjacent in $\Gamma'(R)$ if and only if $ef=0$ and $fe=0$.
\end{enumerate}
\end{lemma}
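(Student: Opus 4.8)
All four parts reduce to two elementary inputs --- the involution axioms (notably $(ab)^*=b^*a^*$, $(a^*)^*=a$, and that $*$ is a bijection) and, for a projection $e$, the identities $e^m=e$, $e^*=e$, $e(1-e)=(1-e)e=0$ --- so the plan is to dispose of (1), (2), (4) quickly and spend a little care on (3), the only part with real content. For (1): if $x\in Z^*(R)$, pick $0\neq a$ with $xa=0$ (the case $ax=0$ is symmetric) and apply $*$ to get $a^*x^*=0$; since $*$ is a bijection, $a^*\neq 0$ and $x^*\neq 0$, so $x^*\in Z^*(R)$, and the converse is this statement applied to $x^*$ using $(x^*)^*=x$.

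For (2), I first note the claim only makes sense when $x^i,y^j$ are again vertices (so $i,j$ should be positive integers with $x^i,y^j\neq 0$). Granting this, the slick route is to apply part (1) of the preceding neighborhood lemma twice: from $y\in N(x)\subseteq N(x^i)$ we get $x^i\leftrightarrow y$, hence $x^i\in N(y)\subseteq N(y^j)$, hence $x^i\leftrightarrow y^j$. (Directly: assuming $x^ny^*=0$, one checks $(x^i)^n(y^j)^*=x^{in}(y^*)^j=0$, using $x^{in}=x^{(i-1)n}x^n$ and $x^ny^*=0$.)

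For (3), the key point is that $e$ being a projection forces $e^m=e$ and $e^*=e$, so the condition that $e$ and $x$ are adjacent collapses to the condition that $ex^*=0$ or $x^m e=0$ for some $m\in\mathbb{N}$. In the first case, applying $*$ gives $xe=0$, so $x=x-xe=x(1-e)\in R(1-e)$ (so $x^n\in R(1-e)$ with $n=1$). In the second case, $x^m e=0$ gives $x^m=x^m-x^m e=x^m(1-e)\in R(1-e)$ at once. Conversely, if $x^n=r(1-e)$ for some $r\in R$, then $x^n e^*=x^n e=r(1-e)e=0$, which is one of the two disjuncts of the adjacency condition, so $e\leftrightarrow x$ (and $x\neq e$ is automatic, since $e\in R(1-e)$ would force $e=e\cdot e=0$, contrary to $e\in Z^*(R)$). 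Keeping the exponents straight across the two cases is the main --- and only mild --- obstacle.

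For (4), apply (3) with $x=f$: since $f^n=f$, adjacency of $e$ and $f$ is equivalent to $f\in R(1-e)$, i.e. to $fe=0$; applying $*$ to $fe=0$ gives $ef=(fe)^*=0$ and the argument reverses, so $e\leftrightarrow f$ if and only if $ef=0$ and $fe=0$. Alternatively, argue directly: adjacency of the projections $e$ and $f$ is equivalent to $ef=0$ or $fe=0$, and each disjunct implies the other on applying $*$.
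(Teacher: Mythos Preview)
Your proof is correct and follows essentially the same approach as the paper's. The only differences are cosmetic: you are more explicit about the two sub-cases in (3) and add the check that $x\neq e$, you flag the implicit assumption in (2) that $x^i,y^j$ remain nonzero vertices, and you offer the alternative routes for (2) via the neighborhood inclusion $N(x)\subseteq N(x^k)$ and for (4) via (3); the paper simply does the direct computations in each case.
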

\begin{proof}\noindent
\begin{enumerate}
\item If $x\in Z^*(R)$, then there exists $y\in Z^*(R)$ such that $xy=0$. Therefore $y^*x^*=0$. Since $y\neq 0$,  $y^*\neq 0$. Thus $x^*\in Z^*(R).$  Similarly if $x^*\in Z^*(R)$ then $x=(x^*)^*\in Z^*(R)$. 
\item Let $x$ and $y$ be adjacent in $\Gamma'(R)$ and $i,~j \in \mathbb N$. Then there exists $n\in \mathbb{N}$ such that $x^ny^*=0$ or $y^nx^*=0$. Therefore $(x^i)^n(y^j)^*=0$ or $(y^j)^n(x^i)^*=0$. Thus $x^i$ and  $y^j$ are adjacent.
\item If $e$ and $x$ are adjacent in $\Gamma'(R)$ then there exists a  positive integer $n$ such that $e^nx^*=ex^*=0$ or $x^ne^*=0$. Therefore $x^ne=0$.  Hence $x^n=x^n(1-e)\in R(1-e)$.
Conversely, if $x^n\in R(1-e)$ for some positive integer $n$, then $x^n=x^n(1-e)$. Therefore $x^ne^*=x^ne=0$. Hence $x$ and  $e$ are adjacent.
\item If $e$ and $f$ are adjacent, then by definition $ef=0$ or $fe=0$. Since $e,f$ are projections,  $ef=0$ if and only if  $fe=f^*e^*=(ef)^*=0$.
\end{enumerate}
\end{proof}

 In $\Gamma^*(R)$ nilpotent elements in $R$ need not be adjacent to all other vertices, see Example \ref{ex 1}. 

 \begin{theorem}\label{thm2.4}
 Let $R$ be a $*$-ring, and $x$ be  a non-zero nilpotent element in  $R$  then $x$ is adjacent to all other vertices in $\Gamma'(R)$.
 \end{theorem}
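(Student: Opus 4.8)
The plan is to argue straight from Definition~\ref{def 2}. First I would pin down the nilpotency data: since $x$ is a non-zero nilpotent element of $R$, there is a least positive integer $s$ with $x^s=0$, and $x\neq 0$ forces $s\geq 2$ and $x^{s-1}\neq 0$. In particular $x\cdot x^{s-1}=x^s=0$ with $x^{s-1}\neq 0$, so $x$ is a genuine non-zero zero-divisor and therefore $x\in Z^*(R)$ is indeed a vertex of $\Gamma'(R)$. This small verification is all the setup the proof needs.

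For the main step, let $y\in Z^*(R)$ be an arbitrary vertex with $y\neq x$. Then $x^s y^{*}=0\cdot y^{*}=0$, so taking $n=s$ the adjacency condition ``$x^n y^{*}=0$'' of Definition~\ref{def 2} holds, and hence $x\leftrightarrow y$. Since $y$ was an arbitrary vertex different from $x$, the element $x$ is adjacent to every other vertex of $\Gamma'(R)$; equivalently $N(x)=Z^*(R)\setminus\{x\}$, which is exactly the assertion (and matches part~(3) of the preceding neighborhood lemma).

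I do not expect any genuine obstacle here. The only points requiring a moment's care are bookkeeping ones: that the graph is simple, so ``adjacent to all other vertices'' must read ``to all vertices except $x$'', and that one should confirm $x$ actually lies in the vertex set $Z^*(R)$ --- both dealt with by the opening observation about the nilpotency index. No assumption on the involution (such as properness) and no unitality hypothesis on $R$ is used.
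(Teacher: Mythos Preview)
Your argument is correct and essentially identical to the paper's: choose $n$ (your $s$) with $x^n=0$ and observe $x^n y^*=0$ for every $y\in Z^*(R)$, giving adjacency by Definition~\ref{def 2}. Your additional verification that a non-zero nilpotent element actually lies in $Z^*(R)$ is a nice bit of care the paper omits, but the core reasoning is the same.
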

 \begin{proof}
  Let $x$ be a non-zero nilpotent element in $R$. Then there exists positive integer $n$ such that $x^n=0$. Therefore $x^ny^*=0$ for all $y\in Z^*(R)$. Hence $x$ is adjacent to all $y$ in $\Gamma'(R)$. 
 \end{proof}
\begin{corollary}\label{cor 2.8}
Let $R$ be a ring with involution $*$, if $R$ contains a non-zero nilpotent element then $\Gamma'(R)$ is connected. 
\end{corollary}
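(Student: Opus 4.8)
The plan is to read this off directly from Theorem \ref{thm2.4}, which already does the real work. The only preliminary point to nail down is that a non-zero nilpotent element is actually a vertex of $\Gamma'(R)$, i.e.\ that it lies in $Z^*(R)$. So first I would argue: let $x\in R$ be a non-zero nilpotent element and let $n\geq 1$ be least with $x^n=0$. Since $x\neq 0$ we have $n\geq 2$, so $x^{n-1}\neq 0$ while $x\cdot x^{n-1}=0$; hence $x$ is a non-zero zero-divisor and $x\in Z^*(R)=V(\Gamma'(R))$.

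Next, invoke Theorem \ref{thm2.4}: this $x$ is adjacent to every other vertex of $\Gamma'(R)$, so $x$ is a universal vertex. From this, connectedness is immediate: given any two distinct vertices $y,z\in Z^*(R)$, if one of them equals $x$ they are adjacent, and otherwise $y\leftrightarrow x\leftrightarrow z$ is a path of length $2$ joining them. Hence every pair of vertices is joined by a path, so $\Gamma'(R)$ is connected (in fact $\mathrm{diam}(\Gamma'(R))\leq 2$, which would be worth stating as a by-product).

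There is essentially no obstacle here — the corollary is a formal consequence of Theorem \ref{thm2.4}. The only thing one must be slightly careful about is not implicitly assuming $|Z^*(R)|\geq 2$: the statement is still fine (and vacuously/trivially true) if $Z^*(R)=\{x\}$, since a one-vertex graph is connected. So the write-up is just the two observations above, and I would keep it to a couple of lines.
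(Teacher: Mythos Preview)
Your proposal is correct and takes essentially the same approach as the paper, which simply states that the result follows from Theorem~\ref{thm2.4}. Your version is more detailed (verifying $x\in Z^*(R)$ and spelling out the universal-vertex argument, including the diameter bound), but the underlying idea is identical.
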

\begin{proof}
Follows from  Theorem \ref{thm2.4}.
\end{proof}
For any ring $R$, $\Gamma(R)$ is always connected (\cite{anderson1999zero}, Theorem 2.3).
We give an example of a $*$-ring such that $\Gamma'(R)$ is disconnected.
 \begin{example}
 Let $R=\mathbb Z_3\times \mathbb Z_3$ with the involution $(x,y)^*=(y,x)$. Here, $\Gamma'(R)$ is disconnected as depicted in  Figure \ref{fig 9}.
 \end{example}
 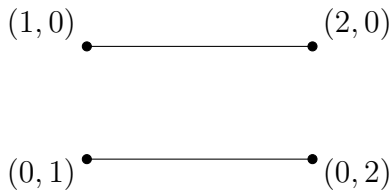
\begin{figure}[h] 
 	\begin{tikzpicture}[scale=1.0]		
 
 		\node [left] at(5.5,-0.2) {$(0,1)$}; 
 					    \node [left] at(9.7,-0.2) {$(0,2)$}; 
 					    \node [left] at(5.5,1.8) {$(1,0)$};
 					     \node [left] at(9.7,1.8) {$(2,0)$};
 					  
 						\draw[fill=black](5.5,0)circle(.06);  
 						\draw[fill=black](8.5,0)circle(.06); 
 						\draw[fill=black](5.5,1.5)circle(.06);  
 					 	\draw[fill=black](8.5,1.5)circle(.06); 
 				
 							\draw (5.5,0)--(8.5,0);
\draw (5.5,1.5)--(8.5,1.5);
 
 					\end{tikzpicture}
 \caption{ $\Gamma'(\mathbb Z_3 \times \mathbb Z_3)$  }
 \label{fig 9}
 \end{figure}  
 In the next theorem, we discuss connectivity and  diameter of $\Gamma'(R)$, where $R$ is a finite $*$-ring  with unity.

\begin{theorem} \label{thm 1}
Let $R$ be a finite $*$-ring with unity and an identity involution. Then $\Gamma'(R)$ is connected and its diameter is 2.
\end{theorem}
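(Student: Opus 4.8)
An identity involution makes $R$ commutative, since $xy=(xy)^{*}=y^{*}x^{*}=yx$ for all $x,y\in R$; so the two clauses of Definition~\ref{def 2} coincide, $x\leftrightarrow y$ in $\Gamma'(R)$ precisely when $x^{n}y=0$ for some $n\ge1$, and $\Gamma^{*}(R)=\Gamma(R)$. Since $\Gamma^{*}(R)$ is a spanning subgraph of $\Gamma'(R)$ and $\Gamma(R)$ is connected by (\cite{anderson1999zero}, Theorem~2.3), $\Gamma'(R)$ is connected. It thus remains to find, for any two distinct non-adjacent $x,y\in Z^{*}(R)$, a common neighbour; this gives $\operatorname{diam}(\Gamma'(R))\le2$, with equality whenever $\Gamma'(R)$ has two non-adjacent vertices.

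Since $(R,\cdot)$ is a finite monoid, every element has an idempotent power, so pick $n,m\ge1$ with $e:=x^{n}$ and $f:=y^{m}$ idempotent. If $e=0$ then $x$ is nilpotent, hence adjacent to all other vertices by Theorem~\ref{thm2.4} — impossible, as $x\not\leftrightarrow y$; and if $e=1$ then $x$ is a unit, contradicting $x\in Z^{*}(R)$. So $e$, and likewise $f$, is a non-trivial projection; then $1-e,1-f\in Z^{*}(R)$, and $x^{n}(1-e)=e(1-e)=0$ gives $x\leftrightarrow 1-e$, similarly $y\leftrightarrow 1-f$ (if $x=1-e$ or $y=1-f$, then $x$, resp.\ $y$, is itself a projection and one concludes directly via Lemma~\ref{lem5.2}(4)). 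Now set $g:=(1-e)(1-f)$, again a projection, with $eg=fg=0$. If $g\neq0$, then $g$ is a non-zero non-unit idempotent, so $g\in Z^{*}(R)$, and $x^{n}g=eg=0$, $y^{m}g=fg=0$ show $x\leftrightarrow g\leftrightarrow y$; hence $d(x,y)\le2$.

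The case $g=0$ is, I expect, the main obstacle. Here I would invoke the decomposition $R\cong R_{1}\times\cdots\times R_{k}$ of a finite commutative ring into finite local rings, under which $e$ and $f$ correspond to subsets $S,T\subseteq\{1,\dots,k\}$ with $S\cup T=\{1,\dots,k\}$. If some $R_{i}$ is not a field, it contains a non-zero nilpotent, and then the element of $R$ equal to that nilpotent in coordinate $i$ and $0$ elsewhere is a non-zero nilpotent of $R$, hence by Theorem~\ref{thm2.4} a common neighbour of $x$ and $y$, so again $d(x,y)\le2$. If every $R_{i}$ is a field, then $R$ is reduced, $\Gamma'(R)=\Gamma^{*}(R)=\Gamma(R)$, adjacency is disjointness of coordinate supports, and a common neighbour must be produced by a direct inspection of the finitely many possible supports of $x$ and $y$ — verifying that one always exists is the delicate step. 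Collecting the cases yields $\operatorname{diam}(\Gamma'(R))\le2$.
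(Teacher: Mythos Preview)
Your reduction is clean and you correctly isolate the crucial case, but the ``delicate step'' you flag cannot be carried out: the statement as written is false. Take $R=\mathbb{Z}_2\times\mathbb{Z}_2\times\mathbb{Z}_2$ with the identity involution, $x=(1,1,0)$, $y=(0,1,1)$. The ring is reduced, so $\Gamma'(R)=\Gamma(R)$ and adjacency is disjointness of coordinate supports. Here $e=x$, $f=y$, and $g=(1-e)(1-f)=(0,0,1)(1,0,0)=0$, putting you exactly in your residual case with all factors fields. Any common neighbour of $x$ and $y$ would need support disjoint from $\{1,2\}\cup\{2,3\}=\{1,2,3\}$, hence be $0$; thus $d(x,y)\ge 3$ (in fact $d(x,y)=3$ via $x\leftrightarrow(0,0,1)\leftrightarrow(1,0,0)\leftrightarrow y$). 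So no ``direct inspection of supports'' can save this step.

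The paper's own proof has precisely the same gap. In the reduced case it selects indices $i,j$ with $x_i=0$, $y_j=0$ and proposes $z=(0,\dots,z_i,\dots,z_j,\dots,0)$ with $z_i,z_j\ne 0$, but never checks that $z$ is adjacent to $x$ and $y$. In the example above this yields $z=(1,0,1)$, and $xz=(1,0,0)\ne 0$, so $x\not\leftrightarrow z$. What both your argument and the paper's actually prove is connectedness with $\operatorname{diam}(\Gamma'(R))\le 3$, which already follows from Anderson--Livingston for $\Gamma(R)$ together with the fact that $\Gamma(R)=\Gamma^{*}(R)$ is a spanning subgraph of $\Gamma'(R)$.
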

\begin{proof}
Let $R$ be a $*$-ring. If $R$ has a non-zero nilpotent element $a$ then it is adjacent to all other vertices in $\Gamma'(R)$. Hence there is a path $x\leftrightarrow a\leftrightarrow y$ between any two distinct vertices $x$ and $y$ of $R$.
If $R$ does not contain any non-zero nilpotent element then $R$ is a reduced ring and hence $\displaystyle R=\prod_{i=1}^{n}D_i$, where $D_i$ are an integral domains. For any two vertices $x=(x_1, x_2,\ldots ,x_n)$ and $ y=(y_1, y_2,\ldots ,y_n)$  which are not adjacent, there exists $i$ and $j$ such that $x_i=0$ and $y_j=0$. Choose $z_i\neq 0$ and $z_j\neq 0$ and $z=(0,0,\ldots, z_i,\ldots, z_j,\ldots, 0)$. Hence there is a path $x\leftrightarrow z\leftrightarrow y$. Thus $\Gamma'(R)$ is connected and the diameter is $2$.
\end{proof}
\begin{theorem}\label{thm 2.10}
Let $R=M_n(\mathbb{Z}_m)$ be a $*$-ring with the transpose of a matrix as an involution then $\Gamma'(R)$ is connected.
Moreover, $\text{diam}(\Gamma'(R))=2$ and $\text{gr}(\Gamma'(R))=3$.
\end{theorem}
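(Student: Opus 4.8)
The strategy is to lean on the abundance of nilpotent elements in a full matrix ring; most of the work has already been done in Theorem~\ref{thm2.4} and Corollary~\ref{cor 2.8}. Throughout I assume $n\geq 2$, since for $n=1$ the ring is merely $\mathbb{Z}_m$ and the conclusion can fail (for instance $\Gamma'(\mathbb{Z}_8)$ is the complete graph $K_3$, so its diameter is $1$). For $n\geq 2$ the matrix unit $E_{12}$ satisfies $E_{12}^2=0$ and $E_{12}\neq 0$, so it is a non-zero nilpotent element and hence a vertex of $\Gamma'(R)$; by Theorem~\ref{thm2.4} it is adjacent to every other vertex. Corollary~\ref{cor 2.8} then yields that $\Gamma'(R)$ is connected, and the same fact bounds the diameter: given distinct vertices $x,y$, either one of them is $E_{12}$, so $d(x,y)=1$, or $x\leftrightarrow E_{12}\leftrightarrow y$ is a path, so $d(x,y)\leq 2$. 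Hence $\text{diam}(\Gamma'(R))\leq 2$.

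To see that the diameter is not $1$, I would exhibit two non-adjacent vertices. The candidates I would use are $x=E_{11}$ and $y=E_{11}+E_{12}$. Both are idempotent, so $x^k=E_{11}$ and $y^k=E_{11}+E_{12}$ for every positive integer $k$; both are genuine zero-divisors (for example $xE_{22}=0$ and $y(E_{11}-E_{21})=0$), hence lie in $Z^*(R)$; and they are distinct. Since $y^*=E_{11}+E_{21}$, a short matrix-unit computation gives $x^ky^*=E_{11}\neq 0$ and $y^kx^*=E_{11}\neq 0$ for all $k$, so by Definition~\ref{def 2} $x$ and $y$ are not adjacent. Therefore $\text{diam}(\Gamma'(R))=2$.

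For the girth I would simply display a triangle. Take the three distinct vertices $E_{12},\,E_{21},\,E_{11}$; because $E_{12}$ and $E_{21}$ are non-zero nilpotents, Theorem~\ref{thm2.4} makes each of them adjacent to every other vertex, so all three edges are present. Thus $\Gamma'(R)$ contains a $3$-cycle, and as a simple graph it has no shorter cycle, so $\text{gr}(\Gamma'(R))=3$.

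The connectivity, the bound $\text{diam}\leq 2$, and the bound $\text{gr}\leq 3$ are essentially free once one invokes the results on nilpotent vertices, so the only point that genuinely needs attention is showing that $\Gamma'(R)$ is not complete, i.e.\ producing the non-adjacent pair above and checking that both members are zero-divisors; this is the one place a brief calculation is unavoidable. One should also remember to state the hypothesis $n\geq 2$, since the claim is false in the $1\times 1$ case.
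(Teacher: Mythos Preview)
Your argument follows the same route as the paper: exploit the nilpotent matrix unit $E_{12}$ (via Theorem~\ref{thm2.4}) to get connectedness and $\text{diam}\leq 2$, and use the triangle $\{E_{12},E_{21},X\}$ for $\text{gr}=3$. In fact your version is more complete than the paper's: the paper only establishes $\text{diam}(\Gamma'(R))\leq 2$ and then asserts equality, whereas you actually supply a non-adjacent pair $E_{11}$, $E_{11}+E_{12}$ to rule out $\text{diam}=1$; and you correctly flag the tacit hypothesis $n\geq 2$, which the paper leaves implicit.
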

\begin{proof}
Let $E_{ij}$ be $n\times n$ matrix with $(i,j)^{th}$ entry $1$ and $0$ otherwise. 
 Clearly, $E_{12}$ is a nilpotent element. Let $X, Y$ be any two vertices in $V(\Gamma'(R))$. If either $X$ or $Y$ is nilpotent then $X$ and $Y$ are adjacent. If none of $X$ or $Y$ is nilpotent then $X\leftrightarrow  E_{12} \leftrightarrow Y$ is a path. Hence $\Gamma'(R)$ is connected and  $\text{diam}(\Gamma'(R))=2$. Next, for any $X\in V(\Gamma'(R))\setminus \{E_{12},E_{21}\}$, $\{E_{12}, E_{21}, X\}$ forms a clique in $\Gamma'(R)$. Hence $\text{gr}(\Gamma'(R))=3$.
\end{proof}

We provide an example of a $*$-ring $R$ for which $\Gamma^*(R)$ is disconnected but $\Gamma'(R)$ is connected. 
\begin{example}\label{ex 1}
Let $R=M_2(\mathbb Z_2)$ be a $*$-ring with the transpose of a matrix as an involution. By Corollary \ref{cor 2.8}, $\Gamma'(R)$ is connected. Let $E_{ij}$ denote $2\times 2$ matrix with $(i,j)^{th}$ entry $1$ and $0$ otherwise. The set of the non-zero zero-divisors of $R$ is $Z^*(R)=\{A_1,A_2,A_3,A_4,A_5,A_6,A_7,A_8,A_9\}$, where 
$A_1=E_{11},~ A_2=E_{12},~A_3=E_{21},~A_4=E_{22},~ A_5=E_{11}+E_{12},~ A_6=E_{11}+E_{21},~ A_7=E_{12}+E_{22},~ A_8=E_{21}+E_{22},~ A_9=A_6+A_7$.
As depicted in  Figure \ref{M1}, $\Gamma^*(R)$ is disconnected. 
\end{example}
\begin{figure}[!hbt]
\includegraphics[width=5cm]{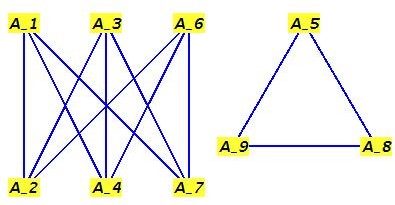}
\caption{$\Gamma^*(M_2(\mathbb Z_2))$}
\label{M1}
\end{figure}
In the following proposition, we find the girth of $\Gamma'(R)$.
\begin{proposition}\label{prop 1}
Let $R$ be a finite $*$-ring. Then $\text{gr}(\Gamma'(R))=3~\text{or}~4~\text{or}~\infty$.
\end{proposition}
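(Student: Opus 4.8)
The plan is to split on whether $R$ is reduced: the non-reduced case falls out at once from the results already in this section, while the reduced case is turned into a combinatorial problem about products of finite fields. If $\Gamma'(R)$ has no cycle then $\text{gr}(\Gamma'(R))=\infty$ and there is nothing to prove, so throughout I assume $\Gamma'(R)$ contains a cycle $C$ of length $m\geq 3$; it then suffices to extract from $C$ a cycle of length $3$ or $4$.

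Suppose first that $R$ has a non-zero nilpotent element $a$. By Theorem~\ref{thm2.4}, $a$ is adjacent to every other vertex of $\Gamma'(R)$. If $m=3$ then $C$ is a triangle; if $m\geq 4$ then at most two of the $m$ edges of $C$ meet $a$, so $C$ has an edge $u\leftrightarrow v$ with $u,v\neq a$, and $\{a,u,v\}$ is a triangle. Hence $\text{gr}(\Gamma'(R))=3$ in this case.

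The substantive case is $R$ reduced. Being finite and reduced, $R$ is a finite product of finite fields, $R\cong\prod_{i=1}^{k}F_i$ (its Jacobson radical is nilpotent, hence zero, so $R$ is semisimple, and its factors are finite division rings, hence fields); and since $R$ is reduced, $\Gamma'(R)\cong\Gamma^*(R)$ by the earlier theorem, so $x\leftrightarrow y$ exactly when $xy^*=0$. Let $\mathrm{supp}(x)\subseteq\{1,\dots,k\}$ denote the set of nonzero coordinates of $x$. The involution permutes the coordinate idempotents $e_1,\dots,e_k$ by some permutation $\sigma$, and $(x^*)^*=x$ forces $\sigma^2=\mathrm{id}$, so $\sigma$ is a product of transpositions and fixed points; moreover $\mathrm{supp}(x^*)=\sigma(\mathrm{supp}(x))$, the vertices of $\Gamma'(R)$ are exactly the $x$ with $\emptyset\neq\mathrm{supp}(x)\subsetneq\{1,\dots,k\}$, and
\[
x\leftrightarrow y\ \iff\ \sigma(\mathrm{supp}(x))\cap\mathrm{supp}(y)=\emptyset,
\]
so adjacency depends only on the supports. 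I would first dispose of $k\leq 2$ by inspection: there are no vertices if $k=1$, and if $k=2$ then $\Gamma'(R)$ is the complete bipartite graph $K_{|F_1|-1,\,|F_2|-1}$ when $\sigma=\mathrm{id}$ and the disjoint union $K_{|F_1|-1}\sqcup K_{|F_2|-1}$ when $\sigma=(1\,2)$, both of girth in $\{3,4,\infty\}$. For $k\geq 3$ I would argue with the ``elementary'' vertices $\lambda e_i$ ($\lambda\in F_i\setminus\{0\}$): $\lambda e_i\leftrightarrow\mu e_j$ for $i\neq j$ unless $\{i,j\}$ is a transposition of $\sigma$, while two distinct $\lambda e_i,\mu e_i$ are adjacent iff $\sigma(i)\neq i$. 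A case analysis on the cycle type of $\sigma$ and on the field sizes should then show that, whenever $\Gamma'(R)$ has a cycle, one finds among these vertices a triangle (three multiples of one $e_i$, or a triple $e_i,e_j,e_i+e_j$ with $i,j$ suitably placed relative to $\sigma$), giving girth $3$; or, in the essentially bipartite configurations — e.g. $R=\mathbb{Z}_3\times\mathbb{Z}_2\times\mathbb{Z}_2$ with $(x,y,z)^*=(x,z,y)$, whose $\Gamma'(R)$ contains $K_{2,3}$ — a $4$-cycle; and in every remaining configuration $\Gamma'(R)$ is a forest. Combining the cases gives $\text{gr}(\Gamma'(R))\in\{3,4,\infty\}$.

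The main obstacle is making the $k\geq 3$ reduced subcase genuinely exhaustive — I must be certain that no choice of $k$, of the $|F_i|$, and of $\sigma$ yields a $\Gamma'(R)$ whose shortest cycle has length $\geq 5$. The number of cases stays bounded because the transpositions of $\sigma$ are pairwise disjoint, but the bookkeeping needs care. An alternative that sidesteps the casework is to take a shortest cycle $v_0\leftrightarrow v_1\leftrightarrow\cdots\leftrightarrow v_{m-1}\leftrightarrow v_0$ with $m\geq 5$ and, using only the relations $\sigma(\mathrm{supp}(v_t))\cap\mathrm{supp}(v_{t+1})=\emptyset$, deduce $\sigma(\mathrm{supp}(v_s))\cap\mathrm{supp}(v_t)=\emptyset$ for some non-consecutive pair $v_s,v_t$, which furnishes a chord and contradicts minimality; keeping track of which indices can lie in which supports along the cycle is the delicate step. (If \cite{patil2018zero} already records $\text{gr}(\Gamma^*(R))\in\{3,4,\infty\}$, the reduced case is immediate.)
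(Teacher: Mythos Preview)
Your overall architecture---split on whether $R$ is reduced, then invoke the product-of-fields decomposition in the reduced case---is exactly the paper's. The two proofs diverge in how each branch is executed.

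In the non-reduced branch the paper does not argue via a pre-existing cycle as you do; it simply writes down an explicit triangle, taking $\{a,a^2,a^3\}$ when $a^3\neq 0$ and $\{a,\,1\pm a^2\}$ or $\{a,\,1\pm a\}$ otherwise. Your ``universal vertex plus any edge'' argument is a legitimate alternative and has the advantage of not needing a unit element or worrying about whether $1\pm a$ is actually a zero-divisor.

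In the reduced branch the paper is far terser than you: having written $R=\prod_{i=1}^{k}D_i$, it asserts that for $k=2$ the graph is the complete bipartite $K_{m,n}$ (girth $4$ or $\infty$) and that for $k\geq 3$ the girth is $3$, full stop. The permutation $\sigma$ of the primitive idempotents never appears; the paper tacitly treats the involution as fixing each factor, so that adjacency reduces to disjointness of supports and $e_1,e_2,e_3$ give an immediate triangle. The ``main obstacle'' you identify---making the $k\geq 3$ case exhaustive over all possible $\sigma$---is therefore real, but the paper does not address it either. If you are willing to argue at the paper's level of generality, the reduced case is two lines; if you insist on covering every involution (as the bare statement of the proposition would require), your support/$\sigma$ framework is the right set-up, but the case analysis you sketch still has to be completed, and neither your proposal nor the paper actually does so.
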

\begin{proof}
If $R$ is a reduced ring, then $R=\prod_{i=1}^{k}D_i$, and each $D_i$ is an integral domain. 
If  $R=D_1\times D_2$  then $\Gamma'(R)=K_{m,n}$ which is a complete bipartite graph, where $|D_1|=m, ~|D_2|=n$. It does not have an odd cycle. Hence its girth is greater than or equal to 4. If $m\geq 2$ and $n\geq 2$ then the girth is exactly 4. If $m=1$ or $n=1$ then girth is $\infty$.
If $R=D_1\times D_2\times D_3$  then the girth of  $\Gamma'(R)$ is $3$.\\
If $R$ is not a reduced ring then $R$ has a nilpotent element say $a$. If  $a^3\neq 0$ then elements $a,~a^2,~a^3$ form a clique in  $\Gamma'(R)$. If $a^3=0$ and $a^2\neq 0$,  then elements $1-a^2,~ 1+a^2, ~a$ are distinct and form  a clique in  $\Gamma'(R)$. If $a^2=0$, then $1-a,~ 1+a,~ a$ are distinct and form  a clique in  $\Gamma'(R)$. Hence  girth of $\Gamma'(R)$ is 3.
\end{proof}
We provide an example of a $*$-ring $R$ for which the diameter of $\Gamma^*(R)$ and $\Gamma'(R)$ are not equal.
\begin{example} 
Let $R=M_2(\mathbb Z_{21})$ with the transpose of a matrix as an involution. By [\cite{patil2018zero}, Example 3.1], $\text{diam}(\Gamma^*(R))=4$. But by  Theorem \ref{thm 2.10}, $\text{diam}(\Gamma'(R))=2$.
\end{example}

To express $\Gamma'(R)$ as a generalized join of graphs recall the  following definition from \cite{cardoso2013spectra}.

\begin{definition} Let $G_1, G_2,\ldots, G_n$ be graphs and $H$ be a graph  with $n$ vertices $ \left\{1,2,\ldots,n\right\}$. The $H$-generalized join of the graphs $G_1, G_2,\ldots,G_n$ is denoted by $\displaystyle\bigvee_{H}\{G_1, G_2,\ldots,G_n\}$ obtained by replacing each vertex $i$ of $H$ by the graph $G_i$ and joining any two vertices of $G_i$ and $G_j$ if and only if  vertices $i$ and $j$ are adjacent in $H$. \end{definition}
Let $R=M_2(F)$ be a matrix ring over a finite field F. In the following theorem, $\Gamma'(R)$ is expressed as a generalized join of  the induced subgraph of  $\Gamma^*(R)$ and the complete graph with  non-zero nilpotent elements in $R$ as vertices.
\begin{theorem}
Let $F$ be a finite field and $R=M_2(F)$. Let $N$ be the set of all non-zero nilpotent elements in a ring $R$. Then $$ \Gamma'(R)=\Gamma_1 \bigvee_{K_2} K_{|N|},$$
where $\Gamma_1$ is the induced subgraph of $\Gamma^*(R)$ on the set of all non-nilpotent zero-divisors in $R$.
\end{theorem}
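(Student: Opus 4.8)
The plan is to first pin down the vertex set $Z^*(R)$ explicitly and split it into the two pieces that become the blocks of the generalized join. Since $R=M_2(F)$ with $F$ a field, a matrix is a zero-divisor exactly when it is singular, and a non-zero singular $2\times 2$ matrix has rank $1$. For a rank-$1$ matrix $A$, Cayley--Hamilton (together with $\det A=0$) gives $A^2=\operatorname{tr}(A)\,A$. Hence either $\operatorname{tr}(A)=0$, so that $A^2=0$ and $A\in N$; or $\operatorname{tr}(A)\neq 0$, in which case $A^k=\operatorname{tr}(A)^{k-1}A\neq 0$ for every $k$, so $A$ is a non-nilpotent zero-divisor, and moreover $A^{|F|}=A$ since $\operatorname{tr}(A)^{|F|-1}=1$ in $F^{*}$; that is, $A$ is potent. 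Thus $Z^*(R)=M\sqcup N$, where $M$ is the set of non-nilpotent zero-divisors and $N$ the set of non-zero nilpotent elements, and every element of $M$ is potent. I would also record that $M$ and $N$ are both non-empty (e.g. $E_{11}\in M$, $E_{12}\in N$), so that the join is genuinely over $K_2$.

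Next I would classify the adjacencies of $\Gamma'(R)$ by block. First, by Theorem \ref{thm2.4} every $x\in N$ is adjacent in $\Gamma'(R)$ to every other vertex; in particular $N$ induces $K_{|N|}$, and every vertex of $N$ is joined to every vertex of $M$. Second, for $x,y\in M$ I would show adjacency in $\Gamma'(R)$ coincides with adjacency in $\Gamma^*(R)$: since $x^n=\operatorname{tr}(x)^{n-1}x$ with $\operatorname{tr}(x)$ a unit of $F$, we get $x^ny^*=0\iff xy^*=0$, and likewise $y^nx^*=0\iff yx^*=0$; combining with $xy^*=0\iff(xy^*)^*=yx^*=0$ yields $x\leftrightarrow y$ in $\Gamma'(R)\iff xy^*=0\iff x\leftrightarrow y$ in $\Gamma^*(R)$. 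Hence the subgraph of $\Gamma'(R)$ induced on $M$ is exactly $\Gamma_1$.

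Assembling these facts: $\Gamma'(R)$ has vertex set $M\sqcup N$; its restriction to $M$ is $\Gamma_1$; its restriction to $N$ is $K_{|N|}$; and all edges between the two blocks are present (Theorem \ref{thm2.4}). This is precisely the description of $\Gamma_1\bigvee_{K_2}K_{|N|}$, the two adjacent vertices of $K_2$ being replaced by $\Gamma_1$ and $K_{|N|}$, which proves the theorem.

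The only step that really needs care is the equivalence $x^ny^*=0\iff xy^*=0$ for $x\in M$, i.e. that cancelling the scalar $\operatorname{tr}(x)^{n-1}$ is legitimate because $F$ is a field; everything else is bookkeeping and an application of Cayley--Hamilton. One could alternatively invoke the earlier fact that a potent element satisfies $N(x)=N(x^k)$ for all $k$, but the direct computation via $A^2=\operatorname{tr}(A)A$ keeps the argument self-contained.
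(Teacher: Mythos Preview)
Your proposal is correct and follows essentially the same route as the paper: both arguments use Cayley--Hamilton on a singular $2\times 2$ matrix to obtain $A^2=\operatorname{tr}(A)\,A$ (the paper writes $\operatorname{tr}(A)=a+b\alpha$ via an explicit row parametrization), split $Z^*(R)$ into the nilpotent and non-nilpotent pieces accordingly, and then cancel the non-zero scalar $\operatorname{tr}(A)^{n-1}$ to reduce $\Gamma'$-adjacency on the non-nilpotent part to $\Gamma^*$-adjacency. Your version is slightly tidier in that it works directly with the trace and explicitly records the equivalence $xy^*=0\iff yx^*=0$ (via transpose), a step the paper leaves implicit.
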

\begin{proof}
Let $A=\begin{bmatrix}a&b\\c&d
\end{bmatrix} \in M_2(F)$ be a zero-divisor then $\det(A)=0$. Which gives $c=a\alpha, d=b\alpha$, for some $\alpha$ and hence $A=\begin{bmatrix} a&b\\ a\alpha & b\alpha
\end{bmatrix}$. 
Then by Caley-Hamilton theorem, $A^2-(a+b\alpha)A=0$.
  Therefore $A^2=(a+b\alpha)A$. Hence $A^k=(a+b\alpha)^{k-1}A$ for any positive integer $k$. If $a+b\alpha =0$ then $A^2=0$, otherwise $A$ is not a nilpotent element.
 If $A$ is a nilpotent element then all vertices in $\Gamma'(R)$ are adjacent to $A$. Suppose $A$ is not a nilpotent element and  $B$ is any non-nilpotent element, then $A$ and $B$ are adjacent if and only if there exists positive integer $n$ such that $A^nB^*=(a+b\alpha)^{n-1}AB^*=0$ or $B^nA^*=(c+d\beta)^{n-1}BA^*=0$. Thus $A$ and $B$ are adjacent in $\Gamma'(R)$ if and only if $AB^*=0$. Hence $A$ and $B$ are adjacent in $\Gamma'(R)$ if and only if they are adjacent in $\Gamma_1$. A nilpotent element in $R$ is adjacent to all other elements in $\Gamma'(R)$. Thus $\Gamma'(R)$ is a $K_2$-join of $\Gamma_1$ and $K_{|N|}$. 
 \end{proof}
 As an illustration of the above  theorem, we express $\Gamma'(M_2(\mathbb Z_2))$ as a generalized join of two graphs. 
 \begin{example}
 Let $R=M_2(\mathbb Z_2)$ be a $*$-ring with the transpose of a matrix as an involution. The graphs $\Gamma^*(R)$ and $\Gamma'(R)$ are depicted in  Figure \ref{M1} and Figure  \ref{fig M3} respectively. Let $\Gamma_1$ be the induced subgraph of $\Gamma^*(R)$ on the set of all non-nilpotent zero-divisors in $R$, depicted in Figure \ref{fig M1}. Let $N$ be the set of all non-zero nilpotent elements in $R$, and $K_{|N|}$ be the complete graph on $|N|$ vertices, depicted in Figure \ref{fig M2}.
 Then $\Gamma'(R)$ is $K_2$ generalized join of two graphs $\Gamma_1$  and $K_{|N|}$ as shown in Figure \ref{fig M3}.
 \end{example}
 \begin{figure}[!hbt]
 \begin{minipage}[c]{0.3\linewidth}
 \centering
 \includegraphics[width=2.0cm]{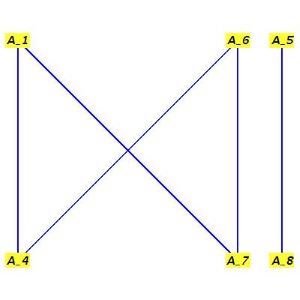}
 \caption{$\Gamma_1$}
 \label{fig M1}
 \end{minipage}\hfill
 \begin{minipage}[c]{0.3\linewidth}
 \centering
 \includegraphics[width=2.0cm]{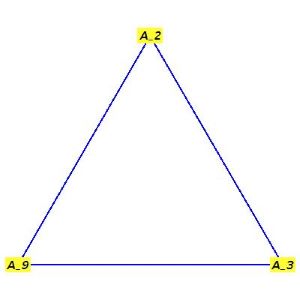}
 \caption{$K_{|N|}$}
 \label{fig M2}
 \end{minipage}
  \begin{minipage}[c]{0.3\linewidth}
  \centering
  \includegraphics[width=3.0cm]{M_2_Z_2_1}
  \caption{$\Gamma'(R)$}
  \label{fig M3}
  \end{minipage}
 \end{figure}

In the following result, we give a condition on $\Gamma'(R)$ so that the ring $R$ is finite.
\begin{lemma}
If $R$ is a $*$-ring such that every vertex of $\Gamma^{'}(R)$ has a finite degree, then $R$ is a finite ring.
\end{lemma}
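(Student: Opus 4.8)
The plan is to mimic the classical Anderson--Livingston argument that a zero-divisor graph with all vertices of finite degree forces finiteness, adapting it to the involution setting. First I would dispose of the easy case: if $R$ is infinite but $Z^*(R)$ is finite, I would derive a contradiction. Indeed, if $R$ is not a domain then $Z^*(R)\neq\varnothing$; pick $0\neq a\in Z(R)$ with $ab=0$ for some $0\neq b$. Then $b^*a^*=0$, so the left ideal $\operatorname{ann}_r(a)$ (or a suitable annihilator) is nonzero, and every element of $\{rb : r\in R\}\cup\{r : ra=0\}$ lies in $Z(R)$. If $R$ is infinite, one of these annihilator sets must be infinite, and then one shows that a single vertex (namely $a$, or $a^*$) is adjacent in $\Gamma'(R)$ to infinitely many of them: if $ra=0$ for infinitely many $r$, then for each such nonzero $r$ we have $r^1 a^* {}$... more carefully, since adjacency of $x$ and $a$ requires $x^n a^*=0$ or $a^n x^*=0$, I would instead use that $\Gamma^*(R)$ is a subgraph of $\Gamma'(R)$ (the Remark), so it suffices to exhibit infinitely many $x$ with $x a^* = 0$ or $a x^* = 0$; taking $x\in\operatorname{ann}_l(a^*)\setminus\{0\}$ gives $x a^* = 0$, and this set is a left ideal, infinite if $R$ is.

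So the crux is: if $Z^*(R)$ is finite, I claim $R$ is finite. Suppose not. As above there is $0\neq a$ with $I:=\operatorname{ann}_l(a^*)\neq 0$ (replacing $a$ by $a^*$ if necessary so that $a^*$ is still a nonzero zero-divisor, which holds by Lemma~\ref{lem5.2}(1)). Every nonzero element of $I$ is a zero-divisor, hence a vertex of $\Gamma'(R)$; since $Z^*(R)$ is finite, $I$ is finite. Now I would run the standard pigeonhole: the map $R\to I$, $r\mapsto ra^*$ has image contained in the finite set $Ra^*\subseteq Z(R)\cup\{0\}$... actually $Ra^*\cup\{0\}\subseteq Z^*(R)\cup\{0\}$ only if no element of $Ra^*$ is a unit-multiple making it a non-zero-divisor, but $xa^*$ always annihilates nothing canonically — instead use that $a^*\in Z^*(R)$ means $Ra^*$ consists of zero-divisors together with $0$, so $Ra^*\cup\{0\}$ is finite. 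Then the additive group $R$ is covered by finitely many cosets of $\operatorname{ann}_l(a^*)=I$ (the fibers of $r\mapsto ra^*$), and $I$ is finite, so $R$ is finite.

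The main obstacle I anticipate is the bookkeeping around annihilators on the correct side and the fact that $\Gamma'(R)$ uses the twisted product $x^n y^*$ rather than $xy$: I must be careful that the "infinitely many neighbors of a fixed vertex" I produce really are neighbors under Definition~\ref{def 2}, which is why routing everything through the Remark that $\Gamma^*(R)\subseteq\Gamma'(R)$ and through Lemma~\ref{lem5.2}(1) (so that $x\in Z^*(R)\iff x^*\in Z^*(R)$) is the clean way to proceed. I would also handle separately the degenerate possibility that $R$ has characteristic issues making $Z^*(R)=\varnothing$ while $R$ is infinite — but then $R$ is a domain and $\Gamma'(R)$ is the empty graph, for which the hypothesis "every vertex has finite degree" is vacuous, so strictly the statement as phrased is about rings that actually have vertices; I will note that if $\Gamma'(R)$ has no vertices there is nothing to prove, and otherwise the argument above applies. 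Assembling: finitely many additive cosets of a finite subgroup $I\le(R,+)$ exhaust $R$, forcing $|R|<\infty$.
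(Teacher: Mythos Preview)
Your proposal is correct and takes essentially the same route as the paper: assume $R$ infinite, fix a nonzero zero-divisor, and use the pigeonhole observation that either the one-sided principal ideal it generates or its one-sided annihilator is infinite, yielding infinitely many $\Gamma^*(R)$-neighbors (hence $\Gamma'(R)$-neighbors) of a single fixed vertex. The paper's write-up is simply the streamlined version of this: from $x^ny^*=0$ it notes $y^*R\subseteq r_R(x^n)$, so either $y^*R$ is infinite (giving $x^n$ infinite degree) or it is finite, in which case infinitely many $a_i\in R$ satisfy $y^*a_0=y^*a_i$, putting infinitely many $a_0-a_i$ in $r_R(y^*)$ and giving $y^*$ infinite degree.

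One concrete bookkeeping point to fix when you write it out: your claim that ``$Ra^*$ consists of zero-divisors'' requires $a^*$ to be a \emph{left} zero-divisor, whereas $I=\operatorname{ann}_l(a^*)\neq 0$ only tells you it is a \emph{right} zero-divisor. The clean choice (matching the paper) is to start from $ab=0$, use the right ideal $bR$ (every nonzero $br$ is adjacent to $a^*$ since $(a^*)^1(br)^*=0$ is not the relation --- rather use $a(br)=0$, so $br$ is adjacent to $a^*$ via $(br)(a^*)^*=0$ in $\Gamma^*$), and then pigeonhole on the map $r\mapsto br$ with kernel $\operatorname{ann}_r(b)$. Your two paragraphs are also doing the same job twice; one pass suffices.
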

\begin{proof}
Let $R$ be a $*$-ring and every vertex of $\Gamma'(R)$ has a finite degree. Suppose  $R$ is  infinite. Let $x$ and $y$ denote non-zero elements of $R$ such that $x^ny^*=0$ or $y^nx^*=0$ for some positive integer $n$. Then $y^*R\subseteq r_R({x^n})$. If $y^*R$ is infinite, then $x^n$ has an infinite degree in $\Gamma^{'}(R)$, a contradiction. Hence $y^*R$ is finite. Therefore there exists an infinite subset $B=\left\{a_0,a_1,a_2,\dots \right\}$ of $R$ such that  $y^*a_0=y^*a_i$ for all $i\in \mathbb{N}$. Hence $\{a_0-a_i~|~i\in \mathbb{N} \}$ is an infinite subset of $r_R({y^*})$ and so $y^*$ has an infinite degree in $\Gamma^{'}(R)$, a contradiction. Thus, $R$ must be finite.
\end{proof}

\section{Connectedness and Completeness of $\Gamma'(R)$ }
In this section,  we give a characterizations for completeness and  connectedness of $\Gamma'(R)$. 
In the following lemma, we identify nilpotent elements and projections in $R$ from $\Gamma'(R)$.
\begin{lemma}\label{lem5.3}
Let $R$ be a $*$-ring with unity and $a\in Z^*(R)$ be an element that is adjacent to all other elements of $Z^*(R)$, then   $a$ is a nilpotent element or a projection. Further if $a$ is a projection then it is  orthogonal to all other non-trivial projections.
\end{lemma}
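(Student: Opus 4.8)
The plan is to extract ring-theoretic information from the hypothesis that $a$ is adjacent to every other vertex of $\Gamma'(R)$, and to use the structure lemmas already established, especially Lemma \ref{lem5.2}. First I would recall that adjacency of $a$ to all vertices means: for every $y\in Z^*(R)\setminus\{a\}$ there is a positive integer $n$ with $a^ny^*=0$ or $y^na^*=0$. The natural first move is to test this against the special vertices built from $a$ itself. By Lemma \ref{lem5.2}(1), $a^*\in Z^*(R)$, and assuming $a^*\neq a$ (the self-adjoint case handled separately), $a$ is adjacent to $a^*$, giving $a^n(a^*)^*=a^{n+1}=0$ or $(a^*)^na^*=(a^{n+1})^*=0$; either way $a$ is nilpotent. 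So the crux is really the case $a=a^*$, i.e. $a$ is self-adjoint.

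When $a=a^*$, I would argue that $a$ is either nilpotent or a projection. If $a$ is not nilpotent, then no power $a^k$ is zero, so all the powers $a, a^2, a^3,\dots$ lie in $Z^*(R)$ (they are zero-divisors since $a$ is). Consider the chain of vertices $a^2, a^3,\dots$. Since $a$ is adjacent to each $a^k$ (for $k\geq 2$, these are distinct from $a$ unless $a^k=a$), and $a=a^*$, adjacency of $a$ and $a^k$ forces $a\cdot (a^k)=a^{k+1}=0$ or $(a^k)\cdot a=a^{k+1}=0$ — wait, this would again make $a$ nilpotent, contradicting our assumption, so in fact the only way out is that $a^k=a$ for the relevant $k$, i.e. $a$ is potent. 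The remaining work is to show a self-adjoint potent zero-divisor that dominates $\Gamma'(R)$ must be a projection: if $a^{m}=a$ with $m\geq 2$ minimal, then $e=a^{m-1}$ is an idempotent, and $e=e^*$ since $a=a^*$, so $e$ is a projection; one then shows $a$ itself must equal this $e$ (using that $a$ and $e=a^{m-1}$ have the same neighbourhoods by Lemma's potent clause, plus that $a-e$ or a suitable auxiliary element would otherwise be a non-neighbour of $a$), concluding $a$ is a projection.

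For the final assertion, suppose $a$ is a projection and let $f$ be any non-trivial projection in $Z^*(R)$ with $f\neq a$. Since $a$ dominates $\Gamma'(R)$, $a$ and $f$ are adjacent, so by Lemma \ref{lem5.2}(4), $af=0$ and $fa=0$; that is, $a$ is orthogonal to $f$. If $f=a$ there is nothing to prove, and a non-trivial projection $f$ with $f=a$ is vacuously fine. So every non-trivial projection other than $a$ is orthogonal to $a$, which is the stated conclusion. (One should also check the degenerate possibility that $Z^*(R)=\{a\}$ or similarly tiny, where the domination hypothesis is vacuous; then $a$ being self-adjoint and a zero-divisor, the classification into nilpotent-or-projection still needs the potency argument above, or one simply notes that such small cases force $R$ to be of a very restricted form handled directly.)

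The main obstacle I anticipate is the self-adjoint potent case: ruling out a self-adjoint potent zero-divisor $a$ (with $a^m=a$, $m\geq 3$, genuinely not an idempotent) from being a dominating vertex. The danger is an element like $a$ with $a^2\neq a$ but $a^3=a$ — one must produce an explicit vertex $y\in Z^*(R)$ that is \emph{not} adjacent to $a$, contradicting domination. The candidate is something like $y=1-a^2+a$ or $y=1-e$ where $e=a^{m-1}$, chosen so that no power of $a$ annihilates $y^*$ and no power of $y$ annihilates $a^*=a$; verifying $y\in Z^*(R)$ and that the annihilation genuinely fails requires the potency identity $a^{k}=a^{k \bmod (m-1) \,+\,\text{something}}$ and a careful check that $ay\neq 0$ persists through all powers. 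This bookkeeping, together with separating the trivial small-ring cases, is where the real care is needed; everything else follows cleanly from Lemma \ref{lem5.2} and Theorem \ref{thm2.4}.
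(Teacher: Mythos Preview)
Your outline is exactly the paper's: test adjacency of $a$ against $a^*$ to force $a=a^*$ (else $a$ is nilpotent), then against $a^2$ to force $a^2=a$ (else $a$ is nilpotent), and invoke Lemma~\ref{lem5.2}(4) for the orthogonality statement.

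The ``main obstacle'' you anticipate does not exist. Once $a=a^*$ and $a$ is not nilpotent, take $k=2$ only: $a^2\in Z^*(R)$ (it is nonzero since $a$ is not nilpotent, and a zero-divisor since $a$ is), and if $a^2\neq a$ then adjacency of $a$ and $a^2$ gives $a^n(a^2)^*=a^{n+2}=0$ or $(a^2)^na^*=a^{2n+1}=0$ for some $n\geq 1$, contradicting non-nilpotency. Hence $a^2=a$ outright, and there is no separate self-adjoint potent-but-not-idempotent case to handle; your own argument, specialized to $k=2$, already excludes it. The search for an auxiliary non-neighbour $y$ and the bookkeeping with $a^m=a$, $m\geq 3$, are unnecessary. (Minor correction: adjacency yields $a^{n+k}=0$ or $a^{kn+1}=0$ for \emph{some} $n$, not literally $a^{k+1}=0$; the nilpotency conclusion is unaffected.) The degenerate case $Z^*(R)=\{a\}$ is also covered by the same reasoning: if $a\neq a^*$ or $a^2\notin\{0,a\}$ one produces a second element of $Z^*(R)$, which is impossible.
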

\begin{proof}
Suppose $a \in Z^*(R)$ and $a$ and  $x$ are adjacent for any $x\in Z^*(R)$.  If $a$ is a nilpotent element then we are done. Assume $a$ is not a nilpotent element. We will show $a=a^*=a^2$. If $a\neq a^*$ then $a^2=a(a^*)^*=0$. Therefore $a$ is nilpotent,  a contradiction. Hence $a=a^*$. Now, we will show $a^2=a$. If  $a^2\neq a$ and $a^2\neq 0$ then $a^3=a(a^2)^*=aa^2=0$. Therefore $a$ is nilpotent, a contradiction. Hence $a^2=a$. Thus $a$ is a projection.\\
Let $e$ be a projection that is adjacent to all vertices  in $\Gamma'(R)$.  Note that for any other nontrivial projection $f$ in $R$, $ef=ef^*=0=fe$, because $e$ is adjacent to $f$. Thus $e$ is orthogonal to $f$.
\end{proof}

An ideal $I$ of a $*$-ring $R$ is a $*$-ideal if $a^*\in I$ whenever $a\in I$ (see \cite{berberian1972baer}).

The following theorem gives a characterization for  $\Gamma^{'}(R)$ to be  a complete graph. 
\begin{theorem}
Let $R$ be an abelian  $*$-ring with unity. Then $\Gamma^{'}(R)$ is complete if and only if either $R\simeq \mathbb{Z}_2 \oplus \mathbb{Z}_2 $ or  $Z^*(R)$ is a set of  nilpotent elements in $R$. Further, if $R$ is a commutative ring  and $\Gamma'(R)=K_n$ with $n>2$ then  $Z(R)$ is a $*$-ideal of $R$.
\end{theorem}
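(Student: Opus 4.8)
The plan is to prove the biconditional by treating the two ``if'' cases directly and, for the converse, pinning down the ring structure of $R$; the addendum will then follow immediately from that structure together with Lemma~\ref{lem5.2}.

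For the ``if'' direction I would argue as follows. If every element of $Z^*(R)$ is nilpotent, then Theorem~\ref{thm2.4} shows that each vertex is adjacent to all the others, so $\Gamma'(R)$ is complete. If $R\simeq\mathbb{Z}_2\oplus\mathbb{Z}_2$ with the identity involution, then $Z^*(R)=\{(1,0),(0,1)\}$ and $(1,0)(0,1)^*=0$, so $\Gamma'(R)=K_2$ is complete. I would flag here that on $\mathbb{Z}_2\oplus\mathbb{Z}_2$ the coordinate-swap involution instead leaves the two vertices nonadjacent, so the $*$-structure intended on $\mathbb{Z}_2\oplus\mathbb{Z}_2$ is the identity involution.

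For the ``only if'' direction, suppose $\Gamma'(R)$ is complete and $Z^*(R)$ is not contained in the set of nilpotents, and fix a non-nilpotent $a\in Z^*(R)$. Since $a$ is adjacent to every other vertex, Lemma~\ref{lem5.3} forces $a$ to be a projection; set $e:=a$, which is central because $R$ is abelian, with $e\neq 0,1$. Then $1-e$ is a nontrivial projection and, since $e(1-e)=0$, lies in $Z^*(R)$. Applying the orthogonality clause of Lemma~\ref{lem5.3} to $e$ and then to $1-e$ (both are adjacent to all vertices), any nontrivial projection $f$ distinct from $e$ and $1-e$ would satisfy $ef=0=(1-e)f$, hence $f=ef+(1-e)f=0$, a contradiction; so $e$ and $1-e$ are the only nontrivial projections of $R$. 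Centrality of $e$ then splits $R\cong eR\times(1-e)R$, and the heart of the argument is to show $eR=\{0,e\}$. If $0\neq b\in eR$, then $b(1-e)=0$ puts $b\in Z^*(R)$, so by Lemma~\ref{lem5.3} $b$ is nilpotent or a projection; a projection must equal $e$ or $1-e$, and $1-e\notin eR$ (else $1-e=e(1-e)=0$), so $b=e$; and if $b$ were a nonzero nilpotent with $b^k=0$, then $e-b$ would be a unit of $eR$ (with inverse $e+b+\cdots+b^{k-1}$), hence non-nilpotent, while at the same time $e-b$ is a nonzero proper zero-divisor of $R$ (it annihilates $1-e$), so Lemma~\ref{lem5.3} would make $e-b$ a projection, necessarily $e$, giving $b=0$, a contradiction. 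Thus $eR\cong\mathbb{Z}_2$ and, symmetrically, $(1-e)R\cong\mathbb{Z}_2$, so $R\cong\mathbb{Z}_2\oplus\mathbb{Z}_2$ (and, since completeness would fail under the swap involution, with the identity involution). I expect this elimination of nonzero nilpotents from the corner $eR$ to be the main obstacle; the decisive observation is that $e-b$ is both invertible in $eR$, hence non-nilpotent, and a proper zero-divisor of $R$, hence a projection by Lemma~\ref{lem5.3}.

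For the addendum, assume $R$ is commutative and $\Gamma'(R)=K_n$ with $n>2$. Since $\mathbb{Z}_2\oplus\mathbb{Z}_2$ has exactly two nonzero zero-divisors, $R\not\simeq\mathbb{Z}_2\oplus\mathbb{Z}_2$, so the theorem forces every element of $Z^*(R)$ to be nilpotent; together with the fact that every nonzero nilpotent is a zero-divisor, this shows that $Z(R)$ is exactly the nilradical of $R$, which is an ideal because $R$ is commutative. Finally $Z(R)$ is closed under $*$, since $0^*=0$ and, by Lemma~\ref{lem5.2}(1), an element lies in $Z^*(R)$ if and only if its adjoint does. Hence $Z(R)$ is a $*$-ideal of $R$.
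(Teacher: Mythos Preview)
Your proof is correct and follows essentially the same strategy as the paper: apply Lemma~\ref{lem5.3} to every vertex, so that each nonzero zero-divisor is either nilpotent or a projection, and in the presence of a nontrivial central projection $e$ decompose $R=eR\oplus(1-e)R$ and show each corner is $\mathbb{Z}_2$. Your execution is in fact more careful than the paper's: the paper passes from ``$x\in aR$, $x\neq a$'' directly to ``$x=xa=0$'', which tacitly treats $x$ as a projection orthogonal to $a$ and does not visibly dispose of the possibility that $x$ is nilpotent; your $e-b$ argument (unit in $eR$, hence non-nilpotent, hence a projection by Lemma~\ref{lem5.3}, hence equal to $e$) cleanly closes that gap. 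Your remark that the $*$-structure on $\mathbb{Z}_2\oplus\mathbb{Z}_2$ must be the identity involution (since the swap involution makes $(1,0)$ and $(0,1)$ nonadjacent) is also a point the paper leaves implicit. The addendum is handled identically in both proofs.
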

\begin{proof}Let $R$ be an abelian $*$-ring with unity. 
Suppose $\Gamma^{'}(R)$ is complete. Let $a\in Z^*(R)$, then by  Lemma \ref{lem5.3}, $a$ is a central projection or nilpotent. Suppose $a$ is a central projection, then  $R=aR\oplus(1-a)R$. For any $x\neq 0$ in $aR$ we have $x(1-a)=0$. Hence $x\in Z^*(R)$. If $x\neq a$ then   $x=xa=0$, a contradiction. Therefore $aR=\left\{0, a\right\}\simeq \mathbb{Z}_2$. Similarly $(1-e)R\simeq \mathbb{Z}_2$. Hence $R\simeq \mathbb{Z}_2\oplus \mathbb{Z}_2.$ \\
If
 $Z^*(R)$ does not contain any projection, then $Z(R)$ is a set of all  nilpotent elements in $R$. \\
 Conversely, if $R\simeq \mathbb{Z}_2\times \mathbb{Z}_2$ then $\Gamma'(R)=K_2$. Also if $Z(R)$ is the set of all nilpotent elements in $R$  then by  Theorem \ref{thm2.4}, $\Gamma'(R)=K_n$, where $n=|Z(R)|-1$.
 Assume that  $R$ is a commutative ring  and $\Gamma'(R)=K_n$ with $n>2$.  $Z(R)=\sqrt{0}$ is a nilradical of $R$ and hence ideal of $R$. By  Lemma \ref{lem5.2}, it is closed under involution $*$. Hence $Z(R)$ is a $*$-ideal of $R$.
\end{proof}

Next, we give a necessary and sufficient condition on a $*$-ring $R$ such that $\Gamma'(R)$ is a star graph. In connection with that, we need the following lemma. 

\begin{lemma}{\label{lem 2.3}}
Let $R$ be a $*$-ring with unity such that $\Gamma^{'}(R)$ is a star graph with center $e$. If $e^2=0$, then $e^*=e$ and $R$ does not contain a non-trivial idempotent element.
\end{lemma}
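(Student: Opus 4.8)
The plan is to establish the two conclusions separately, the second by contradiction, using repeatedly that a non-zero nilpotent element is adjacent to every other vertex (Theorem~\ref{thm2.4}) and that in a star the centre is the unique vertex adjacent to all of the others. For the first conclusion: since $e^2=0$ we get $(e^*)^2=(e^2)^*=0$, so $e^*$ is a non-zero nilpotent element of $R$, and it is a vertex by Lemma~\ref{lem5.2}(1); hence by Theorem~\ref{thm2.4} it is adjacent to every other vertex of $\Gamma'(R)$, and therefore $e^*$ must be the centre, i.e.\ $e^*=e$.

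For the second conclusion, suppose $f^2=f$ with $f\ne0,1$. Then $f,\ 1-f,\ f^*,\ 1-f^*$ are all non-trivial idempotents (the $*$-image of a non-trivial idempotent is again one), so all lie in $Z^*(R)$, and none of them equals $e$ because $e^2=0$ while a non-zero idempotent is its own square; thus each of the four is a leaf of the star. First I would show $f^*=1-f$: from $f\cdot((1-f)^*)^*=f(1-f)=0$ the definition of $\Gamma'(R)$ (with $n=1$) makes $f$ adjacent to $(1-f)^*=1-f^*$, and since two distinct leaves of a star are never adjacent this forces $f=1-f^*$, i.e.\ $f^*=1-f$ and $ff^*=f^*f=0$. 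Next I would produce a vertex different from $e$ that is adjacent to $f$ or to $f^*$, contradicting that a leaf has exactly one neighbour, by splitting into cases on $ef$. If $ef\notin\{0,e\}$: from $(ef)f^*=ef(1-f)=0$ we get $ef\in Z^*(R)$ with $ef$ adjacent to $f$, and $ef\ne f$ since $ef=f$ would give $f=e(ef)=e^2f=0$, so $f$ has the two neighbours $e$ and $ef$, a contradiction. If $ef=e$: then $ef^*=e(1-f)=0$, so $(f+e)f^*=0$ and $f+e$ is adjacent to $f$, while $f+e\ne f,e$ because $e\ne0\ne f$, again giving $f$ two neighbours. If $ef=0$: then $(f^*+e)f=f^*f+ef=0$, so $f^*+e$ is adjacent to $f^*$, and $f^*+e\ne f^*,e$ since $e\ne0\ne f^*$, contradicting that $f^*$ is a leaf. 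In every case we reach a contradiction, so $R$ has no non-trivial idempotent.

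The step I expect to be the main obstacle is the second one: although $f(1-f)=0$ does not by itself make $f$ and $1-f$ adjacent in $\Gamma'(R)$ (adjacency also involves $*$), one has to notice that the related elements $ef$, $f+e$, $f^*+e$ do create genuine edges at $f$ or $f^*$, and that the trichotomy on $ef$ is exhaustive. One should also keep in mind the degenerate star $K_2$, where the centre is not uniquely determined and the first conclusion can in fact fail (for example $R=\mathbb{Z}_3[x]/(x^2)$ with $x^*=-x$ gives $\Gamma'(R)=K_2$, $e^2=0$, $e^*\ne e$); that case should be excluded from, or handled separately in, the statement.
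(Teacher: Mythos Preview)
Your argument is correct (for stars with at least three vertices) and in both parts cleaner than the paper's. For $e^*=e$, the paper works indirectly through the auxiliary element $e^*e$, first showing $e\ne 1+e^*$, then that every vertex is adjacent to $e^*e$, forcing $e^*e\in\{0,e\}$ and eventually $e^*=e$. Your one-line use of Theorem~\ref{thm2.4} ($e^*$ is nilpotent, hence adjacent to everything, hence must be the centre) is more direct and avoids that detour entirely.

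For the absence of a non-trivial idempotent both proofs first obtain $f^*=1-f$ by the same leaf argument. The paper then asserts that adjacency of $e$ and $x$ gives $x^ne^*=0$, and uses this to build the triangle $\{e,x,x+e\}$; but adjacency only yields $x^ne^*=0$ \emph{or} $e^nx^*=0$, and since $e^2=0$ the second alternative holds automatically, so nothing about $x^ne^*$ follows. Your trichotomy on $ef$ is exactly what fills this gap: when $ef=e$ you recover $ef^*=0$ and with it the paper's triangle $\{e,f,f+e\}$, while the cases $ef=0$ and $ef\notin\{0,e\}$ supply alternative second neighbours of $f^*$ and $f$ respectively. So your proof is not merely different but actually repairs a step that is unjustified in the paper's version.

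Your caveat about $K_2$ is also well taken: $\mathbb{Z}_3[x]/(x^2)$ with $x^*=-x$ is a genuine counterexample to the lemma as stated. The paper only invokes the lemma inside the next theorem, whose standing hypothesis $|V(\Gamma'(R))|\ge 3$ rules this out, so the application is unaffected; but the lemma itself should carry the same restriction.
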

\begin{proof}
Let $\Gamma^{'}(R)$ be a star graph with center $e$. Assume that  $e^2=0$.
Since $e$ is a center of a star graph $\Gamma^{'}(R)$, it is adjacent to all vertices in $Z(R)\setminus\{e, 0\}$. 
Note that $e\neq 1+e^*$. If $e=1+e^*$ then $0=e^2=(1+e^*)^2$ this implies $1+2e^*+(e^*)^2=0$, which gives  $2e^*=-1$. Hence $2e=-1$.  Therefore $e=(-1)(-e)=-2ee=-2e^2=0$ a contradiction. Thus we get $e-e^*\neq 1$.\\ 
We will show that $e=e^*$. On the contrary, assume that  $e\neq e^*$. 
Since $e$ is adjacent to all vertices in $\Gamma'(R)$, for any $a\in Z(R)\setminus\{e,0\}$, there exists a positive integer $n$ such that $a^ne^*=0$ or $ea^*=0$. That is there is a positive integer $n$ such that $a^ne^*=0$.  Then $a^n(e^*e)^*=a^ne^*e=0$. Hence, $a$ is adjacent to $e^*e$. But $e$ is the vertex in $\Gamma'(R)$ which is adjacent to all other vertices. Therefore either $e^*e=0$ or $e^*e=e$. If $e^*e=0$ then $e=0$, otherwise $e$ is adjacent to $e$, which is a contradiction. Therefore $e^*e=e$. This gives $ a^ne^*e=a^ne=0$. Therefore $a^ne=0$, that is $a^n(e^*)^*=0$,  which gives  $e^*$ is adjacent to $a$. Thus $e^*$ is the center of $\Gamma'(R)$ and hence $e^*=e$. \\
Now, $x$ be any non-trivial idempotent element in $R$. Then $x(1-x^*)^*=x(1-x)=x-x^2=0$. \\
We prove that $x=1-x^*$. If $x\neq 1-x^*$ then $x$ and $1-x^*$ are adjacent. Since $\Gamma^{'}(R)$ is a star graph with center $e$, we have either $x=e$ or $1-x^*=e$. If $x=e$, we get $0=e^2=x^2=x$. If $1-x^*=e$ then $(1-x^*)^*=e^*=e$ that is $1-x=e$. Then $0=e^2=(1-x)^2=1-2x+x^2=1-2x+x=1-x $ gives $x=1$. Hence in either case, we have $x=0$ or $x=1$, a contradiction to $x\notin \{0,1\}$. Thus $x=1-x^*$.\\
Now, if $x=1-x^*$ then $xx^*=(1-x^*)x^*=x^*-{x^*}^2=(x-x^2)^*=0$.
Since $e$ and $x$ are adjacent, $x^ne^*=0$ for some positive integer $n$. Then $x^n(x+e)^*=x^nx^*+x^ne^*=x^{n-1}xx^*+x^ne=0$. Hence in either case $x+e$ and $x$ are adjacent.
Therefore the vertices $e, x, x+e$ are distinct and form a triangle in $\Gamma^{'}(R)$ a contradiction to the fact  $\Gamma^{'}(R)$ is a star graph. Thus $R$ does not contain a non-trivial idempotent.
\end{proof} 

In the following theorem, we give a characterization for $\Gamma'(R)$ to be a  star graph. 
\begin{theorem}
Let $R$ be an abelian, Artinian $*$-ring with unity such that the involution  $*$  is proper and $\Gamma^{'}(R)$ contains at least three vertices. Then $\Gamma^{'}(R)$ is a star graph if and only if $R=\mathbb{Z}_2\oplus D$, where $D$ is an integral domain with involution $*$. 
\end{theorem}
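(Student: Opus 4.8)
The plan is to prove the two implications separately, with the ``if'' direction a routine computation in the product ring and the ``only if'' direction resting on the fact that the centre of the star must be a central projection which splits off a copy of $\mathbb{Z}_2$. For the ``if'' direction, suppose $R=\mathbb{Z}_2\oplus D$ with $D$ a domain, so that the involution is the identity on $\mathbb{Z}_2$ and an involution on $D$. Since $\mathbb{Z}_2$ is a field and $D$ has no non-zero zero-divisors, $Z^*(R)=\{(1,0)\}\cup\big(\{0\}\times(D\setminus\{0\})\big)$. A direct check gives $(1,0)^n(0,d)^*=(0,0)$ for every $d$, so $(1,0)$ is adjacent to every other vertex, while for $d,d'\neq 0$ both $(0,d)^n(0,d')^*=(0,d^n(d')^*)$ and $(0,d')^n(0,d)^*$ are non-zero (because $D$ is a domain and $d',(d')^*$ are non-zero), so no two of the vertices $(0,d)$ are adjacent. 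Hence $\Gamma'(R)$ is a star with centre $(1,0)$.

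For the ``only if'' direction, let $\Gamma'(R)$ be a star with centre $e$ and at least three vertices; then $e$ is adjacent to every other vertex. The first step is to identify the nature of $e$. If $e^2=0$, then Lemma \ref{lem 2.3} forces $e^*=e$, whence $ee^*=e^2=0$ and, by properness, $e=0$, contradicting $e\in Z^*(R)$; so $e^2\neq 0$. By Lemma \ref{lem5.3}, $e$ is nilpotent or a projection. If $e$ were nilpotent, then $e^2$ would be a non-zero nilpotent vertex, hence adjacent to all other vertices by Theorem \ref{thm2.4}, so of degree at least $2$; as only the centre has degree at least $2$ in a star with at least three vertices, this would give $e^2=e$ and therefore $e=0$, a contradiction. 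Thus $e$ is a projection, and by the abelian hypothesis it is central, so $R=R_1\oplus R_2$ with $R_1=eR$ and $R_2=(1-e)R$, where $e$ is identified with $(1,0)\in R_1\times R_2$. Note that $R_2\neq 0$, since otherwise $e=1\notin Z^*(R)$.

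The second step is to show $R_1=\mathbb{Z}_2$ and that $R_2$ is a domain. For $0\neq a\in R_1$ the element $(a,0)$ is a vertex (it is annihilated by $1-e\neq 0$); if $a\notin\{0,1\}$ then $(a,0)\neq e$, so $(a,0)$ is adjacent to $e$ and Lemma \ref{lem5.2}(3) yields $(a,0)^n\in R(1-e)$, i.e.\ $a^n=0$. On the other hand, $R_1$ has no non-zero square-zero element $a$: otherwise $(a,0)$ is a non-zero nilpotent of $R$, hence adjacent to all other vertices by Theorem \ref{thm2.4}, which forces $(a,0)=e$, i.e.\ $a=1$, absurd. Consequently $R_1$ has no non-zero nilpotents, and combined with the previous observation this gives $R_1\subseteq\{0,1\}$, so $R_1=\mathbb{Z}_2$. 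Now suppose $0\neq b\in R_2$ is a one-sided zero-divisor of $R_2$, say $bc=0$ (or $cb=0$) with $0\neq c\in R_2$; then $(1,b)$ is a vertex of $\Gamma'(R)$ because $(1,b)(0,c)=(0,bc)=(0,0)$ (respectively $(0,c)(1,b)=(0,0)$), and $(1,b)\neq(1,0)=e$ since $b\neq 0$. Yet $(1,b)^n e^*=(1,b^n)(1,0)=(1,0)\neq 0$ and $e^n(1,b)^*=(1,0)(1,b^*)=(1,0)\neq 0$ for every $n$, so $(1,b)$ is not adjacent to the centre $e$ --- a contradiction. Hence $R_2$ has no non-zero zero-divisors; writing $D=R_2$, which inherits the proper involution, we conclude $R=\mathbb{Z}_2\oplus D$ with $D$ a domain, as required.

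I expect the main obstacle to be the first step: pinning down that the centre $e$ is a central projection, in particular eliminating the nilpotent case, which needs Theorem \ref{thm2.4} together with the degree structure of a star; and in the second step one must be careful to verify that the auxiliary vertices $(a,0)$ and $(1,b)$ are genuinely distinct from $e$, so that the defining properties of a star (leaves have degree $1$ and are pairwise non-adjacent) legitimately apply. I also note that, since $R$ is Artinian, the domain $D=R_2$ is in fact an Artinian domain, i.e.\ a division ring, consistently with the statement.
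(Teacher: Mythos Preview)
Your proof is correct, and in several places takes a different route from the paper. The paper rules out a nilpotent centre by invoking the Artinian hypothesis: assuming $e^2=0$, it uses Lemma~\ref{lem 2.3} and then a descending-chain argument to show every zero-divisor is nilpotent, whence $\Gamma'(R)$ is complete, contradicting the star structure. You instead dispatch the case $e^2=0$ in one line via properness (Lemma~\ref{lem 2.3} gives $e^*=e$, so $ee^*=0$ forces $e=0$), and handle the residual case $e$ nilpotent with $e^2\neq 0$ by observing that $e^2$ would be a second vertex of full degree. For the integral-domain step the paper argues that if $xb=0$ in $(1-e)R$ then $x$ and $b^*$ are adjacent leaves, forcing $x=b^*$ and hence $xx^*=0$, again using properness; you instead produce the vertex $(1,b)$ and check it is not adjacent to the centre. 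The upshot is that your argument never uses the Artinian hypothesis at all (it only appears in your closing remark that $D$ is then a division ring), so you have in fact proved a slightly stronger statement than the paper's theorem.
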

\begin{proof}
Suppose $\Gamma^{'}(R)$ is a star graph with center $e$. By  Lemma \ref{lem5.3}, $e$ is a central projection or $e$ is a nilpotent element. Suppose $e$ is a central projection, then $R=eR\oplus(1-e)R$. If $a\in eR$ is a non-zero element, then it is adjacent to $1-e$. But $\Gamma^{'}(R)$ is a star graph with center $e$, hence the vertex $1-e$ is adjacent to $e$ only. Therefore $a=e$ and hence $eR=\mathbb{Z}_2$. This gives $R=\mathbb{Z}_2 \oplus (1-e)R$.\\
Let $b\in (1-e)R$ and $xb=0$ for some non-zero $x\in (1-e)R$. Then $x,b^*\neq e$. If $x \neq b^*$, then $x(b^*)^*=xb=0$. Therefore $x$ and $b^*$ are adjacent, a contradiction to $\Gamma^{'}(R)$ is a star graph. Therefore $x=b^*$ i.e. $b=x^*$. This gives $xb=0$ which is $xx^*=0$, hence we get  $x=0$, because $*$ is a proper involution. Hence $(1-e)R$ does not contain non-zero zero-divisors. Thus $(1-e)R$ is an integral domain.\\
Now, we prove that the case $e^2=0$ does not arise. Suppose $e^2=0$. Then by  Lemma \ref{lem 2.3}, $R$ does not contain any non-trivial idempotent and $e^*=e$. First, we prove that every vertex of $\Gamma^{'}(R)$ is a nilpotent element. Let $a\in V(\Gamma^{'}(R))\backslash \{e\}$. On the contrary, assume that $a$ is not a nilpotent element.
 Then $a^n=ra^m$ (as $R$ is  Artinian), for some $r \in R$ and some positive integers $m>n$. This gives $(1-ra^{m-n})a^n=0$ and $1-ra^{m-n}\neq 0$ (otherwise $a$ is a unit). Therefore $a^n$ and $(1-ra^{m-n})$ are vertices in $\Gamma'(R)$ and hence adjacent to $e$. If $a^n=e$ then $a^{2n}=e^2=0$ and $a$ is nilpotent element,  a contradiction. If $(1-ra^{m-n})=e$ or $1-ra^{m-n}=a^n$ then $1=2ra^{m-n}+ra^{m-n}ra^{m-n}$ or $1=ra^{m-n}+a^n$ and hence $a$ is a unit, a contradiction. Therefore $\left\{1-ra^{m-n}, (a^n)^*, e\right\}$ forms a triangle in $\Gamma'(R)$, this contradicts  the fact that $\Gamma'(R)$ is a star graph. Thus $a$ must be nilpotent element. 
 Therefore $Z^*(R)$ is a set of all nilpotent elements in $\Gamma'(R)$. Hence $\Gamma'(R)$ is a complete graph. Since $\Gamma'(R)$ is a star graph, $\Gamma'(R)=K_2$ has only two vertices, a contradiction. Thus $e^2=0$ is not possible.\\
 Conversely, if $R=\mathbb{Z}_2\times D$ and $D$ is the domain then clearly $\Gamma'(R)$ is a star graph with  center $(1,0)$.
\end{proof}
For a ring $R$, the following proposition gives characterization for the  connectedness of $\Gamma'(R\times R)$. 
\begin{proposition}{\label{prop5.8}}
Let $R$ be a commutative ring and $A=R\times R$ with $(x,y)^*=(y,x)$ as an involution. Then $\Gamma^{'}(A)$ is connected if and only if $R$ is not an integral domain.
\end{proposition}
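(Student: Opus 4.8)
The plan is to make the vertex set $Z^*(A)$ and the adjacency relation of $\Gamma'(A)$ completely explicit in terms of the exchange involution, and then treat the two implications separately. Since $(a,b)^n=(a^n,b^n)$ and $(c,d)^*=(d,c)$, two distinct vertices $(a,b),(c,d)$ are adjacent in $\Gamma'(A)$ if and only if $a^nd=0=b^nc$ for some $n\geq1$, or $c^nb=0=d^na$ for some $n\geq1$; and $(a,b)$ is a zero-divisor of $A=R\times R$ precisely when $a$ or $b$ lies in $Z(R)$ (the zero-divisors of $R$, with $0$ included), since one may multiply by $(c,0)$ or $(0,d)$ with $c,d\neq0$. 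The structural observation that drives the whole argument is that the sets $C_1=\{(a,0):a\in R\setminus\{0\}\}$ and $C_2=\{(0,b):b\in R\setminus\{0\}\}$ lie in $Z^*(A)$ and each induces a clique of $\Gamma'(A)$: for $a,a'\in R\setminus\{0\}$ one has $(a,0)(a',0)^*=(a,0)(0,a')=(0,0)$, so $(a,0)\leftrightarrow(a',0)$, and symmetrically for $C_2$. (We may harmlessly assume $R\neq0$, the zero ring being the degenerate case $Z^*(A)=\emptyset$.)

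For the implication ``$R$ is not an integral domain $\Rightarrow$ $\Gamma'(A)$ is connected'', I would fix nonzero $u,v\in R$ with $uv=0$, which exist exactly because $R$ is not a domain. Then $(u,v)\in Z^*(A)$, and it bridges the two cliques: $(u,v)(u,0)^*=(u,v)(0,u)=(0,vu)=(0,0)$ shows $(u,v)\leftrightarrow(u,0)\in C_1$, while $(u,v)(0,v)^*=(u,v)(v,0)=(uv,0)=(0,0)$ shows $(u,v)\leftrightarrow(0,v)\in C_2$. Next, every remaining vertex $(a,b)$ with $a\neq0\neq b$ has a neighbour in $C_1\cup C_2$: being a zero-divisor of $A$, such $(a,b)$ forces $a\in Z(R)$ or $b\in Z(R)$; if $a\in Z(R)$, choose $a'\neq0$ with $aa'=0$, and then $(a,b)(0,a')^*=(a,b)(a',0)=(aa',0)=(0,0)$ gives $(a,b)\leftrightarrow(0,a')\in C_2$, while symmetrically $(a,b)$ attaches to $C_1$ when $b\in Z(R)$. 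Since $C_1$ and $C_2$ are cliques joined by the path $(u,0)\leftrightarrow(u,v)\leftrightarrow(0,v)$ and every other vertex is adjacent to one of them, $\Gamma'(A)$ is connected.

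For the converse ``$R$ an integral domain $\Rightarrow$ $\Gamma'(A)$ is disconnected'', I would use $Z(R)=\{0\}$: then $Z^*(A)$ consists exactly of the elements with precisely one coordinate zero, i.e. $Z^*(A)=C_1\cup C_2$ with $C_1\cap C_2=\emptyset$, and both parts are nonempty since $R\neq0$. It then suffices to rule out edges between $C_1$ and $C_2$: for $a,b\in R\setminus\{0\}$ one computes $(a,0)^n(0,b)^*=(a^nb,0)$ and $(0,b)^n(a,0)^*=(0,b^na)$, neither of which vanishes in a domain, so $(a,0)$ and $(0,b)$ are never adjacent. Hence $C_1$ and $C_2$ are two distinct connected components and $\Gamma'(A)$ is disconnected.

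All the computations are elementary; the only care required is the bookkeeping forced by the exchange involution. The one genuine idea — and the step I expect to be the crux — is the reduction of connectivity to bridging the two cliques $C_1$ and $C_2$, together with the realization that such a bridge is exactly a vertex $(u,v)$ arising from a nonzero factorization $uv=0$ in $R$, which is available precisely when $R$ fails to be an integral domain.
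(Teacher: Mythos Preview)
Your proof is correct and follows essentially the same approach as the paper: both show that $C_1=(R\setminus\{0\})\times\{0\}$ and $C_2=\{0\}\times(R\setminus\{0\})$ are cliques with no edges between them when $R$ is a domain, and that a relation $uv=0$ with $u,v\neq 0$ provides a bridge when $R$ is not. The only cosmetic differences are that the paper bridges $C_1$ and $C_2$ directly via the edge $(x,0)\leftrightarrow(0,y)$ (your detour through the mixed vertex $(u,v)$ works but is one step longer than needed), and that your attachment of a general vertex $(a,b)$ to $C_1\cup C_2$ via the zero-divisor characterization is somewhat more explicit than the paper's use of an unspecified neighbour.
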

\begin{proof}
Let $R$ be a commutative ring and $A=R\times R$. We prove that $\Gamma^{'}(A)$ is disconnected if and only if $R$ is an integral domain.\\
Suppose $R$ is an integral domain. Let $x, y\in R\setminus 0$. Therefore for any positive integer $n$, $(x,0)^n(0,y)^*=(x^n,0)(y,0)=(x^ny,0)\neq (0,0)$ and  $(0,y)^n(x,0)^*=(0,y^n)(0,x)=(0,y^nx)\neq (0,0)$. Also, $(x,0)^n(y,0)^*=(x^n,0)(0,y)=(0,0)$ for any positive integer $n$. Therefore  $(R\setminus\{0\})\times \{0\}$ and $\{0\}\times (R\setminus\{0\})$ are components of the graph $\Gamma'(A)$.   Hence $\Gamma^{'}(A)$ is disconnected.\\
Conversely, suppose that $\Gamma^{'}(A)$ is disconnected. If $R$ is not an integral domain. Then there exists non-zero elements $x, y$ in $R$ such that $xy=0$. Let $(a,b)$ and $(c,d)$ be any two vertices in $\Gamma'(A)$. Therefore there exists $(u,v)$ and $(s,t)$ in $R\times R$ such that $(a,b) \leftrightarrow (u,v)$ and $(s,t) \leftrightarrow (c,d)$ are edges. Now $u\neq 0$ or $v\neq 0$ and $s\neq 0$ or $t\neq 0$.
If $u\neq 0$ and $t\neq 0$ then $(a,b) \leftrightarrow (u,0) \leftrightarrow (x,0) \leftrightarrow (0,y) \leftrightarrow (0,t) \leftrightarrow (c,d)$  is a path. If $u\neq 0$ and $s\neq 0$ then $(a,b) \leftrightarrow (u,0) \leftrightarrow (s,0) \leftrightarrow (c,d)$ is a path. Similarly, in other cases, we can find a path between $(a,b)$ and $(c,d)$. Hence $\Gamma'(A)$ is connected, a contradiction. Hence $R$ must be an integral domain. 
\end{proof}

From  Proposition \ref{prop5.8}, it is clear that $\Gamma^{'}(R)$ need not be connected in general. If a ring $R$ is decomposable as the direct sum of two  rings  with component-wise involution then $\Gamma^{'}(R)$ is connected.
\begin{theorem}{\label{thm5.9}}$[$\cite{patil2018zero}, Theorem 2.4$]$
Let $R_1,~ R_2$ be two $*$-rings with $V(\Gamma^{*}(R_1))\neq \phi$ and $V(\Gamma^{*}(R_2))\neq \phi$, and  $R=R_1 \oplus R_2$ is a $*$-ring with a component-wise involution. Then  $\Gamma^{*}(R)$ is connected and diam$(\Gamma^{*}(R))\leq 4$.
\end{theorem}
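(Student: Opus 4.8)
The plan is to produce a single ``bridge'' edge linking the two factors and then route every vertex to it. Write $R=R_1\oplus R_2$ with coordinatewise involution $(a,b)^*=(a^*,b^*)$. First I would record the elementary but crucial fact that for every $0\neq u\in R_1$ and every $0\neq v\in R_2$ the pair $(u,0),(0,v)$ consists of adjacent vertices of $\Gamma^*(R)$: both are nonzero zero-divisors (e.g.\ $(u,0)(0,w)=(0,0)$ for any nonzero $w\in R_2$, and $R_2\neq 0$ since $V(\Gamma^*(R_2))\neq\varnothing$), and $(u,0)(0,v)^*=(u,0)(0,v^*)=(0,0)$. Consequently the sets $A_1=\{(u,0):0\neq u\in R_1\}$ and $A_2=\{(0,v):0\neq v\in R_2\}$ induce a complete bipartite subgraph of $\Gamma^*(R)$, so any two of their members are joined by a path of length $\leq 2$ (length $1$ across the two parts, or length $2$ through the opposite part, which is nonempty).

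Next I would show that every vertex $x=(x_1,x_2)$ of $\Gamma^*(R)$ is either already in $A_1\cup A_2$ or is adjacent to some member of $A_1\cup A_2$. If $x_1=0$ then $x\in A_2$ and if $x_2=0$ then $x\in A_1$, so assume $x_1\neq 0\neq x_2$. Since $x\in Z^*(R)$ it annihilates some nonzero $(c_1,c_2)$ on one side; taking the case $x(c_1,c_2)=(0,0)$, choose an index $i$ with $c_i\neq 0$, so that $x_ic_i=0$. Then for $i=1$ one has $x\,(c_1^*,0)^*=(x_1,x_2)(c_1,0)=(x_1c_1,0)=(0,0)$, i.e.\ $x$ is adjacent to $(c_1^*,0)\in A_1$, and symmetrically for $i=2$ one gets $x$ adjacent to $(0,c_2^*)\in A_2$; the vertex so produced is distinct from $x$ because the other coordinate of $x$ is nonzero.

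With these two steps in hand the conclusion is pure bookkeeping: given vertices $x,y$, pick $p,q\in A_1\cup A_2$ with $d(x,p)\leq 1$ and $d(y,q)\leq 1$ (allowing $p=x$ or $q=y$); since $d(p,q)\leq 2$ by the first step, concatenation yields a walk from $x$ to $y$ of length $\leq 1+2+1=4$, hence $\Gamma^*(R)$ is connected with $\mathrm{diam}(\Gamma^*(R))\leq 4$.

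The one point that needs care, and which I expect to be the real obstacle in full generality, is the left/right asymmetry hidden in the middle step: under the convention that $Z^*(R)$ also contains one-sided zero-divisors, one must check that a vertex which is only a right zero-divisor still has a neighbour of the form $(u,0)$ or $(0,v)$ in $\Gamma^*(R)$, using the interaction of $*$ with left and right annihilators (in the spirit of Lemma \ref{lem5.2}(1)); for the finite and Artinian rings that are the focus of this paper left and right zero-divisors coincide, so the difficulty evaporates there. The remaining degenerate cases (a coordinate of $x$ being zero, or the constructed neighbour coinciding with $x$ or $y$) only shorten the bound and are dispatched immediately.
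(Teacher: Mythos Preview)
The paper does not prove this statement at all; it merely quotes Theorem~2.4 of Patil--Waphare \cite{patil2018zero} and uses it as a black box, so there is no in-paper argument to compare yours against.

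On its own merits your proof is correct and is the natural route: exhibit the complete bipartite ``bridge'' on $A_1=\{(u,0):u\neq 0\}$ and $A_2=\{(0,v):v\neq 0\}$, show every vertex of $\Gamma^*(R)$ lies in $A_1\cup A_2$ or is adjacent to it, and add $1+2+1$. The computation $(u,0)(0,v)^*=(0,0)$ and, for $x=(x_1,x_2)$ with $x_1c_1=0$, the edge $x\leftrightarrow(c_1^*,0)$ via $x(c_1^*,0)^*=(x_1c_1,0)=0$ are exactly right; the distinctness check $x\neq(c_1^*,0)$ using $x_2\neq 0$ is the small detail one must not forget, and you have it.

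The caveat you raise about purely one-sided zero-divisors is not paranoia but a genuine gap in full generality: if $x_1\in R_1$ is a right zero-divisor that is not a left zero-divisor and $x_2\in R_2$ is a non-zero-divisor, then $x=(x_1,x_2)$ satisfies $xy^*\neq 0$ for every nonzero $y$, so $x$ is actually an isolated vertex of $\Gamma^*(R)$ and the conclusion fails. Thus the statement, as written, needs either a two-sided convention for $Z^*(R)$ or a standing hypothesis (commutative, Artinian, reversible, etc.)\ ensuring left and right zero-divisors coincide. For every application made later in this paper (and for the corollary on $\Gamma'(R)$, where nilpotents already give a universal neighbour), your argument is complete.
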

Since for a $*$-ring $R$, $\Gamma^*(R)$ is a subgraph of $\Gamma'(R)$ with the same vertex set, 
as a consequence of the above theorem we get the following corollary for $\Gamma'(R)$.
\begin{corollary}
Let $R_1,~ R_2$ be two $*$-rings with $V(\Gamma^{'}(R_1))\neq \phi$ and $V(\Gamma^{'}(R_2))\neq \phi$, and  $R=R_1 \oplus R_2$ is a $*$-ring with a component-wise involution. Then  $\Gamma^{'}(R)$ is connected and diam$(\Gamma^{'}(R))\leq 4$.
\end{corollary}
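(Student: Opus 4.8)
The plan is to derive this immediately from Theorem \ref{thm5.9} together with the Remark that $\Gamma^*(R)$ is a subgraph of $\Gamma'(R)$. First I would note that for every $*$-ring $S$ the graphs $\Gamma^*(S)$ and $\Gamma'(S)$ have the same vertex set, namely $Z^*(S)$; consequently the hypothesis $V(\Gamma'(R_i))\neq\phi$ is identical to the hypothesis $V(\Gamma^*(R_i))\neq\phi$ required by Theorem \ref{thm5.9}. Applying that theorem to $R=R_1\oplus R_2$ (with its component-wise involution) yields that $\Gamma^*(R)$ is connected and $\text{diam}(\Gamma^*(R))\leq 4$.

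The second step is the general observation that if a graph $H$ has a spanning subgraph $G$ (same vertex set, $E(G)\subseteq E(H)$), then every $G$-path is an $H$-path, so $d_H(x,y)\leq d_G(x,y)$ for all vertices $x,y$; in particular $H$ is connected whenever $G$ is, and $\text{diam}(H)\leq\text{diam}(G)$. Taking $G=\Gamma^*(R)$ and $H=\Gamma'(R)$ — which is legitimate because the two graphs share the vertex set $Z^*(R)$ and, by the Remark, every edge $xy^*=0$ of $\Gamma^*(R)$ is the $n=1$ instance of the adjacency condition defining $\Gamma'(R)$ — we conclude that $\Gamma'(R)$ is connected and $\text{diam}(\Gamma'(R))\leq\text{diam}(\Gamma^*(R))\leq 4$.

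There is essentially no obstacle here: the only point to verify is that the inclusion $\Gamma^*(R)\subseteq\Gamma'(R)$ is a genuine spanning-subgraph inclusion, which is exactly the content of the Remark. A self-contained alternative would be to imitate the proof of Theorem \ref{thm5.9} directly — given vertices $(a_1,a_2)$ and $(b_1,b_2)$ of $Z^*(R)$, choose in each coordinate a nonzero annihilating partner and splice together a path of length at most $4$ through vertices supported on a single coordinate — but routing through Theorem \ref{thm5.9} avoids reproducing that case analysis and is the cleaner route.
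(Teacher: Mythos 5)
Your proposal is correct and follows exactly the route the paper takes: apply Theorem \ref{thm5.9} to $\Gamma^*(R)$ and then use the fact that $\Gamma^*(R)$ is a spanning subgraph of $\Gamma'(R)$ (same vertex set $Z^*(R)$, edges preserved since $xy^*=0$ is the $n=1$ case), so paths transfer and the diameter can only decrease. Your added remark that $V(\Gamma'(R_i))\neq\phi$ coincides with $V(\Gamma^*(R_i))\neq\phi$ makes explicit a point the paper leaves implicit, but the argument is the same.
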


\begin{corollary}
Let $R$ be a $*$-ring. If $R$ contains a non-trivial central projection, then $\Gamma^{'}(R)$ is connected, and diam$(\Gamma^{'}(R))\leq 4$.
\end{corollary}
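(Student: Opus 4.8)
The plan is to reduce the statement to the Peirce decomposition determined by the projection and then argue directly. Write $f=1-e$; since $e$ is a non-trivial central projection so is $f$, and $e^{*}=e$, $f^{*}=f$, so $R=eR\oplus fR$ is a direct sum of $*$-rings equipped with the component-wise involution. I will write elements of $R$ as pairs $(a,b)$ with $a\in eR$, $b\in fR$, and I note that $e$ and $f$ themselves are zero-divisors of $R$ (each annihilates the other), hence are vertices of $\Gamma'(R)$. If both $eR$ and $fR$ have non-zero zero-divisors, then $V(\Gamma'(eR))$ and $V(\Gamma'(fR))$ are both non-empty and the result is immediate from the preceding corollary; but one of the factors could be a domain, so I will instead run a uniform argument covering every case.

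First I would dispose of the case where $R$ is not reduced: then $R$ has a non-zero nilpotent element, which by Theorem \ref{thm2.4} is adjacent to every other vertex, so $\Gamma'(R)$ is connected with diameter at most $2$. So assume $R$ is reduced. Set $A=\{(0,b):0\neq b\in fR\}$ and $B=\{(a,0):0\neq a\in eR\}$; both are non-empty (indeed $f\in A$, $e\in B$), and a one-line check using $e^{*}=e$, $f^{*}=f$ and the component-wise involution shows that every vertex of $A$ is adjacent to $e$ and to every vertex of $B$, and symmetrically every vertex of $B$ is adjacent to $f$ and to every vertex of $A$. Consequently the subgraph induced on $A\cup B$ is connected of diameter at most $2$.

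Next I would show that every vertex lies within distance $1$ of $A\cup B$. A vertex $(a,b)$ with $a=0$ lies in $A$ and one with $b=0$ lies in $B$, so take $a\neq 0\neq b$. Pick $z=(z_{1},z_{2})\neq 0$ with $(a,b)z=0$ or $z(a,b)=0$; since $R$ is reduced, $uv=0\iff vu=0$, so we may assume $(a,b)z=0$, that is, $az_{1}=0$ and $bz_{2}=0$. If $z_{2}\neq 0$ then $(a,b)\,(0,z_{2}^{*})^{*}=(a,b)(0,z_{2})=(0,bz_{2})=(0,0)$, so $(a,b)\leftrightarrow(0,z_{2}^{*})\in A$; if $z_{2}=0$ then $z_{1}\neq 0$ and likewise $(a,b)\leftrightarrow(z_{1}^{*},0)\in B$. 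Combining with the previous paragraph, any two vertices $x,y$ are joined by a path of length at most $1+2+1=4$ (go from $x$ to a neighbour $x'\in A\cup B$, across $A\cup B$ to a neighbour $y'$ of $y$, and then to $y$); hence $\Gamma'(R)$ is connected with $\mathrm{diam}(\Gamma'(R))\leq 4$.

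The delicate points to watch: the adjacency in $\Gamma'(R)$ is defined through the $*$-twisted product $x^{n}y^{*}$, so a relation like $z(a,b)=0$ can only be fed into the definition after it is flipped to $(a,b)z=0$ (this is exactly where reducedness is indispensable, and is why the non-reduced case must be peeled off first so that Theorem \ref{thm2.4} handles it) and after a star is adjoined to the witness (one must take $(0,z_{2}^{*})$, not $(0,z_{2})$). The other point is that the bound is $4$ rather than $5$ precisely because of the cross-adjacencies $(0,b)\leftrightarrow(a,0)$, which bind the two hubs $e$ and $f$ together so that $A\cup B$ has diameter $2$ rather than being two clusters linked only by the single edge $e\leftrightarrow f$.
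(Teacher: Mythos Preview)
Your argument is correct. The paper's proof is a one-liner: it observes that a non-trivial central projection $e$ yields a $*$-ring decomposition $R=eR\oplus(1-e)R$ with the component-wise involution, and then simply invokes Theorem~\ref{thm5.9} (the Patil--Waphare result for $\Gamma^{*}$, hence also for $\Gamma'$). Your approach is genuinely different: instead of citing that theorem, you run a direct argument via the Peirce pieces $A=\{0\}\times(fR\setminus\{0\})$ and $B=(eR\setminus\{0\})\times\{0\}$, first disposing of the non-reduced case by Theorem~\ref{thm2.4} and then, in the reduced case, using the complete-bipartite cross-adjacencies between $A$ and $B$ together with the annihilator trick (carefully starring the witness) to land every vertex within distance~$1$ of $A\cup B$.

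What your route buys is a self-contained proof that does not depend on the hypothesis $V(\Gamma^{*}(R_i))\neq\varnothing$ appearing in Theorem~\ref{thm5.9}; as you yourself flag, one of the Peirce factors could be a domain (e.g.\ $R=\mathbb Z_2\oplus D$), in which case that hypothesis fails and the paper's citation is, strictly read, not applicable. Your argument covers this case cleanly. The cost is a slightly longer proof and the need to invoke the reduced-ring identity $uv=0\Leftrightarrow vu=0$, which you handle correctly.
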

\begin{proof}
Suppose $R$ contains a non-trivial central projection say $e$. Then $R=R_1\oplus R_2$ where $R_1=eR$ and $R_2=(1-e)R$ are $*$-rings. The result follows from the Theorem \ref{thm5.9}.
\end{proof}

\end{document}